\newtheorem{theorem}{Theorem}[section]
\newtheorem{corollary}[theorem]{Corollary}
\newtheorem{proposition}[theorem]{Proposition}
\newtheorem{conjecture}[theorem]{Conjecture}
\newtheorem{question}[theorem]{Question}
\newtheorem{definition}[theorem]{Definition}
\newtheorem{remark}[theorem]{Remark}
\newtheorem{example}[theorem]{Example}
\newcommand{\ignore}[1]{}
\title{Fibonacci-run graphs II: Degree sequences}
\date{\today}
\author{\"Omer E\u{g}ecio\u{g}lu}
\address{Department of Computer Science, University of California Santa Barbara 
\\Santa Barbara, California 93106, USA }
\email{omer@cs.ucsb.edu}
\author{Vesna Ir\v{s}i\v{c}}
\address{Faculty of Mathematics and Physics, University of Ljubljana
\\Slovenia \newline
Institute of Mathematics, Physics and Mechanics, Ljubljana, Slovenia}
\email{vesna.irsic@fmf.uni-lj.si}
\newcommand{\R}{\mathcal{R}}
\DeclareMathOperator{\inv}{inv}
\newcommand{\half}{{\textstyle\frac12}}
\begin{document}
	
\maketitle

\begin{abstract}
Fibonacci cubes are induced subgraphs of hypercube graphs obtained by 
restricting the vertex set  to 
those binary strings which do not contain consecutive 1s. This class of graphs 
has been studied extensively and generalized in many different directions.
Induced subgraphs of the hypercube on 
binary strings with restricted runlengths as vertices define Fibonacci-run graphs. These graphs have the same number 
of vertices as Fibonacci cubes, but fewer edges and different graph theoretical properties.

Basic properties of Fibonacci-run graphs are presented in 
a companion paper, while in this paper we consider the nature of the 
degree sequences of Fibonacci-run graphs. The generating 
function we obtain is a refinement of the generating function 
of the degree sequences, and has a number of corollaries, obtained as specializations. We also obtain several properties of Fibonacci-run graphs viewed as a partially ordered set, and discuss its embedding properties.
\end{abstract}

\medskip\noindent
\textbf{Keywords:} Fibonacci cube, Fibonacci-run graph, degree sequence, generating function. 

\medskip\noindent
\textbf{AMS Math.\ Subj.\ Class.\ (2020)}: 05C75, 05C30, 05C12, 05C40, 05A15

%%%%%%%%%%%%%%%%%%%%%%%%%%%%%%%%%%%%%%%%%
%%%%%%%%%%%%%%%%%%%%%%%%%%%%%%%%%%%%%%%%%%
\section{Introduction}
\label{sec:intro}
%%%%%%%%%%%%%%%%%%%%%%%%%%%%%%%%%%%%%%%%%%
%%%%%%%%%%%%%%%%%%%%%%%%%%%%%%%%%%%%%%%%%%

The \emph{$n$-dimensional hypercube} $Q_n$ is the graph with all binary strings of length $n$ as vertices, 
where two vertices $v_1 v_2 \ldots v_n$ and $u_1 u_2 \ldots u_n$ are 
adjacent if and only if $v_i \neq u_i$ for exactly 
one index $i \in [n]$. We have 
$|V(Q_n)| = 2^n$, and $|E(Q_n)| = n 2^{n-1}$.
 \emph{Fibonacci cubes} $\Gamma_n$ are a subfamily of $Q_n$, and were introduced by Hsu~\cite{hsu1993}. 
The vertices of 
%the Fibonacci cube 
$\Gamma_n$ are the 
\emph{Fibonacci strings} of length $n$, 
$$\mathcal{F}_n = \{ v_1 v_2 \ldots v_n \in \{0,1\}^n ~|~ \; v_i \cdot v_{i+1} = 0 ~, 
i \in [n-1]\}~,$$
and two vertices are adjacent if and only if  they differ in exactly one coordinate. 
Shortly, $\Gamma_n$ is the subgraph of $ Q_n$, 
induced by the vertices that do not contain consecutive $1$s. 
This family of graphs turned out to be interesting, and has been widely investigated, see for example~\cite{klavzar2013-survey, savitha+2020, saygi+2019, azarija+2018, mollard2017}. 

Recall that a Fibonacci cube can equivalently be seen 
by adding the string $00$ to the end of every vertex. 
We call such binary strings {\em extended Fibonacci strings}. 
With this interpretation,
$$
V(\Gamma_n) = \{ w 00 ~|~ w \in \mathcal{F}_n \} ~,
$$ 
and two vertices are adjacent 
if and only if they differ in exactly one coordinate.

Instead of considering extended Fibonacci strings as the vertex set, it is possible to consider \emph{run-constrained binary strings}, which 
are used to define 
\emph{Fibonacci-run graphs} introduced in~\cite{paper1}.
Run-constrained binary strings
are strings of $0$s and $1$s, in which every run  
of $1$s appearing in the word is immediately followed by a strictly longer run of $0$s. 
Run-constrained strings, together with the null word $\lambda$ and the singleton $0$, 
are generated freely (as a monoid) by the letters from the infinite alphabet
\begin{equation}\label{R}
R = 0,100, 11000,1110000, \ldots 
\end{equation}
This means that every run-constrained binary string can be written 
uniquely as a concatenation of zero or more strings from $R$.
Note that run-constrained strings of length $ n \geq 2$ must end with $00$. 

For $n \geq 0$, the Fibonacci-run graph $\R_n$,  has the vertex set
$$
V(\R_n) = 
\{ w 00 ~|~ w00 \mbox{ is a run-constrained string of length $n+2$} \}~,
$$ 
and edge set
$$
E(\R_n) = \{ \{ u00, v00 \} ~|~ H ( u,v ) = 1 \} ~,
$$
where $H(u,v)$ is the Hamming distance between $u, v \in \{0,1\}^n$, i.e.~the number of coordinates in which $u$ and $v$ differ.

\begin{figure}[ht]
	\begin{center}
		\includegraphics[width=0.7\textwidth]{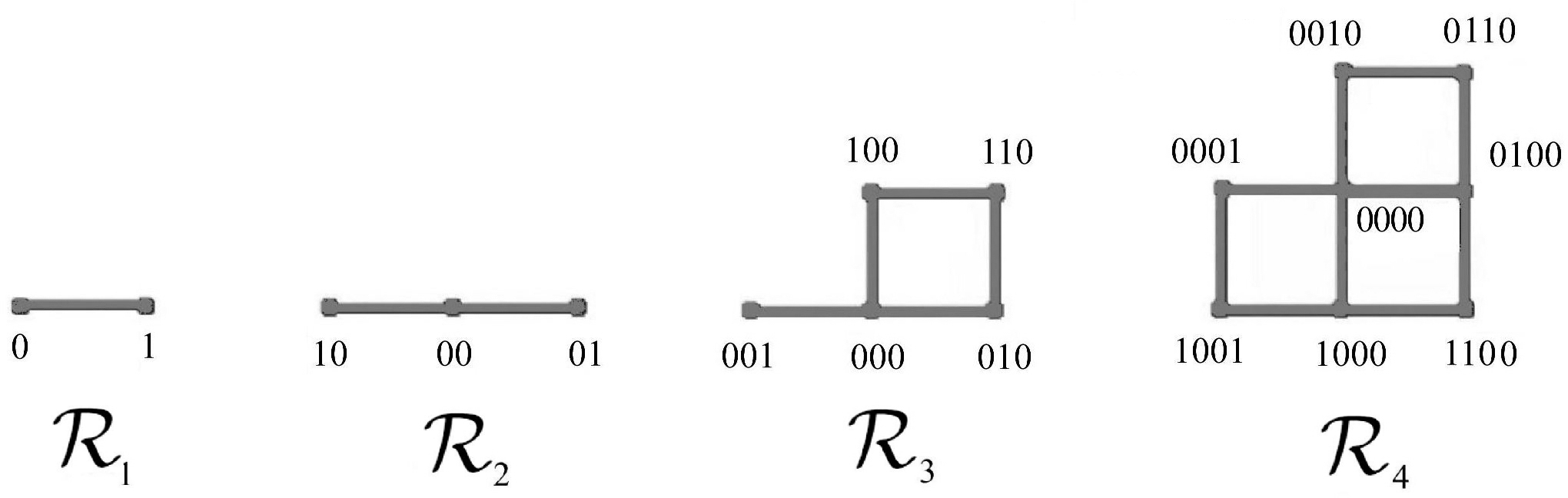}
	\end{center}
	\caption{Fibonacci-run graphs $\R_n$ for $n \in [4]$.}
	\label{fig:fibonacci_run2}
\end{figure}

We take $\R_0$ to be the graph with a single vertex corresponding to the label $00$, 
which after the removal of the trailing pair of zeros,
corresponds to the null word. Clearly, $\R_n$ is a subgraph of $Q_{n+2}$. However it 
is more natural to see it as a subgraph of $Q_n$ after suppressing the trailing $00$ in the 
vertex labels of $\R_n$. In this way, we can view the vertices of $\R_n$ 
without the trailing pair of zeros as 
$$ 
V(\R_n) = 
\{ w ~|~ w00 \mbox{ is a run-constrained binary string of length $n+2$}\} ~.
$$
This is the same kind of convention as viewing $\Gamma_n$ as a subgraph of $ Q_{n+2}$ if 
one thinks of the vertices as extended Fibonacci strings, or as a subgraph of $Q_n$ as usual by 
suppressing the trailing 00 of the vertex labels.
The graphs $\R_1$ -- $\R_4$ with this truncated labeling of run-constrained strings are
shown in Figure~\ref{fig:fibonacci_run2}.

Basic properties of Fibonacci-run graphs
such as the number of vertices, the number of edges, diameter, 
the decomposition into lower dimensional 
Fibonacci-run graphs, Hamiltonicity and the nature of the
asymptotic average degree are studied in~\cite{paper1}.

The rest of the paper is organized as follows.
After the general preliminaries in Section~\ref{sec:prelim},
we consider a decomposition of run-constrained binary strings and prove a 
result on special collections of words in Section~\ref{sec:decomposition}.
In Section~\ref{sec:up-down}, we consider the problem of 
keeping track of both the up-degree and the down-degree of a run-constrained string.
Calculation of the generating function of the up-down degree enumerator polynomials is presented in
Section~\ref{sec:GF}. The proof is divided into a number of subsections.
In Section~\ref{sec:con}, we derive a number of 
consequences of the generating function obtained in Section~\ref{sec:GF}.
Among these is the generating function for the degree enumerator polynomials of 
Fibonacci-run graphs.
Following this, in Section~\ref{sec:poset},
we consider a number of parameters for Fibonacci-run graphs 
as partially ordered sets. These include the rank generating 
polynomial, enumeration of the maximal elements, and the calculation of the 
M\"{o}bius function. A combinatorial aspect of run-constrained strings, namely the 
generating function by inversions is
presented in Section~\ref{sec:inv}.
Embedding and related results are in Section 
\ref{sec:encoding}, followed by conjectures, questions and further directions in Section~\ref{sec:further}.

%%%%%%%%%%%%%%%%%%%%%%%%%%%%%%%%%%%%%%%%%%
%%%%%%%%%%%%%%%%%%%%%%%%%%%%%%%%%%%%%%%%%%
\section{Preliminaries}
\label{sec:prelim}
%%%%%%%%%%%%%%%%%%%%%%%%%%%%%%%%%%%%%%%%%%
%%%%%%%%%%%%%%%%%%%%%%%%%%%%%%%%%%%%%%%%%%

In this section, we present definitions and some 
known results which are needed in the paper. 
To avoid possible confusion that may arise due to the initial values, we reiterate that 
\emph{Fibonacci numbers} are defined as 
$f_0 = 0$, $f_1 = 1$, and $f_n = f_{n-1} + f_{n-2}$ for $n \geq 2$.
The \emph{Hamming weight} of a binary string $u$ is 
the number of $1$s in $u$, denoted by $|u|_1$. If $a, b$ are strings, 
then $ab$ denotes the concatenation of these two strings in that order. 
Similarly, for a set of strings $B$, we set $aB = \{ab ~|~ \; b \in B\}$.

The number of vertices and the number of edges of 
$\R_n$ for $ n \geq 5$ are given by 
\begin{eqnarray*}
|V(\R_n)| & = & |V(\Gamma_n)| = f_{n+2} ~, \\
|E(\R_n)| & = & |E(\Gamma_n)| - |E(\Gamma_{n-4})| = 
(3 n + 4 ) f_{n-6} + (5 n + 6 ) f_ {n-5}~,
\end{eqnarray*}
as proved in~\cite[Lemma 3.1]{paper1} and~\cite[Corollary 4.3]{paper1}.

%Given a vertex $v \in V(\R_n)$, 
%its degree is the number of neighbors of $v$ in $\R_n$,
%denoted by $\deg(v)$. 
%Since each neighbor of $v$ is obtained either by changing a $0$ in $v$ into a $1$ 
%or by changing a $1$ in $v$ into a $0$, we can distinguish between the 
%\emph{up-degree} $\deg_{up} (v)$, and \emph{down-degree} $\deg_{down} (v)$ 
%of the vertex $v$. 
%The up-degree is the number of neighbors of $v$ which are obtained by switching a 
%$0$ in $v$ into a $1$, while the down-degree is the number of neighbors obtained by 
%switching a $1$ in $v$ into a $0$. 
%Clearly, 

Fibonacci-run graphs can also be 
viewed as partially ordered sets whose structure is inherited from the 
Boolean algebra of subsets of $[n]$. The elements here correspond to 
all binary strings of length $n$ and the covering relation is flipping a 0 to a 1. 
Therefore in $\R_n$ we have a natural distinction between up- and down-degree of a vertex, 
denoted by $ \deg_{up}(v)$ and $\deg_{down}(v)$. Here 
$ \deg_{up}(v)$ is the number of vertices $u$ in $\R_n$ obtained by changing a 0 to a 1, and 
$\deg_{down}(v)$ is the number of vertices $u$ in $ \R_n$ obtained from $v$ by changing a 1 to a 0.
Clearly
$$
\deg(v) = \deg_{up} (v) + \deg_{down}(v) ~.
$$
Note that in $ \R_n$, $\deg_{down}(v)$ is not necessarily 
equal to the Hamming weight of $v$ because of 
the constraints on the run-lengths that must hold. \\

The degree sequences, i.e. the nature of the vertices of a given degree in a graph, 
has been well studied for Fibonacci cubes~\cite{klavzar+2011}. 
Here
we keep track of the degree sequences of our graphs $\R_n$ as the 
coefficients of a polynomial. This polynomial is called 
the \emph{degree enumerator polynomial} of the graph denoted by $g_n(x)$. The 
coefficient of $x^i$ in 
the degree enumerator polynomial is the number of vertices of degree $i$ in $\R_n$. 
More precisely,

\begin{definition}\label{gdefinition}
The degree enumerator polynomials $g_n(x)$ of $\R_n$ is defined for $ n \geq 1$ by 
\begin{equation}\label{gn}
g_n(x) = \sum_{v\in \R_n } x^{\deg(v)}  ~.
\end{equation}
\end{definition}

Similar polynomials are defined to keep track of the up- and down-degree sequences as well.
In particular the \emph{down-degree enumerator polynomial} 
and the \emph{up-degree enumerator polynomial} of $\R_n$
are defined as 
$$
\sum_{v\in \R_n } d^{\deg_{down} (v)}~,  ~~\mbox{ and   } ~~~
\sum_{v\in \R_n } u^{\deg_{up} (v)} ~,
%g_n(x) = \sum_{v\in \R_n } x^{\deg (v)}  ~,
$$
respectively. 

The generating function of the sequence of down-degree enumerator polynomials of $\R_n$ is 
$$
\sum_{n \geq 1} 
t^n \sum_{v\in \R_n } d^{\deg_{down} (v)}  ~.
$$
The generating functions of the sequence of up-degree enumerator polynomials and the 
degree enumerator polynomials are defined similarly.

\begin{figure}[ht]
	\begin{center}
		\includegraphics[width=0.45\textwidth]{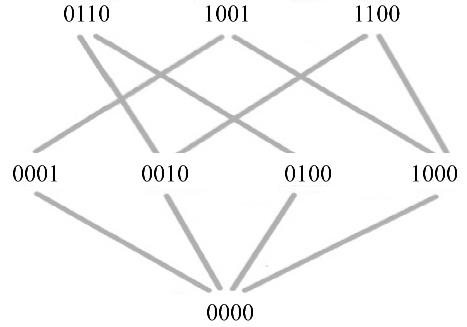}
	\end{center}
	\caption{The Hasse diagram of the Fibonacci-run graph $\R_4$ when viewed as 
a partially ordered set.}
	\label{fig:poset}
\end{figure}

The nature of the distribution of the up-degrees and the down-degrees are most easily seen 
from the Hasse diagram of $\R_n$ for which $ \deg_{up}(v)$ and 
$\deg_{down}(v)$ are simply the number of edges emanating up and down from $v \in \R_n$, respectively.
Inspecting Figure~\ref{fig:poset}, we see that the down-degree, 
up-degree and the degree enumerator polynomials of $\R_4$ are given respectively by
\begin{equation}\label{R4polynomials}
1+4d + 3 d^2 , ~~~~
3+2u+2 u^2 + u^4  , ~~~~
5x^2 + 2 x^3 + x^4 ~.
\end{equation}

It was determined in~\cite[Proposition 7.1]{paper1} and~\cite[Proposition 7.2]{paper1} that 
generating function for down-degree enumerator polynomials of $\R_n$ is
\begin{equation}\label{GFdown}
\frac{t (1 +d +dt +  (d^2-1)t^2  +d(d-1)t^3 + d(d-1)t^4}{1 -t -t^2 -(d-1)t^3 - d(d-1) t^5 }~,
\end{equation}
and the generating function for up-degree enumerator polynomials of $\R_n$ is
\begin{equation}\label{GFup}
\frac{t (1 +u - (u-2) t -2u t^2 + t^3 - (u-1)t^5 - (u-1)t^6 )}
{1- u t - 2 t^2 +(2 u-1) t^3 +t^4 - (u-1) t^5 + (u-1) t^7} ~.
\end{equation}

Our general aim in this paper is to study the generating function
of the bivariate polynomials
\begin{equation}\label{bivariate}
\sum_{v\in \R_n } u^{\deg_{up}(v)} d^{\deg_{down}(v)}
\end{equation}
that we refer to as the {\em up-down degree enumerator}  of $\R_n$.
For example, for $n=4$, this polynomial is given by
\begin{equation}\label{up-down_R4}
3 d^2 + 2du +2 du^2+u^4 ~,
\end{equation}
as can be verified by inspecting $\R_4$ in Figure ~\ref{fig:poset4}, where the term 
contributed by each vertex is indicated in a box. 

\begin{figure}[ht]
	\begin{center}
		\includegraphics[width=0.5\textwidth]{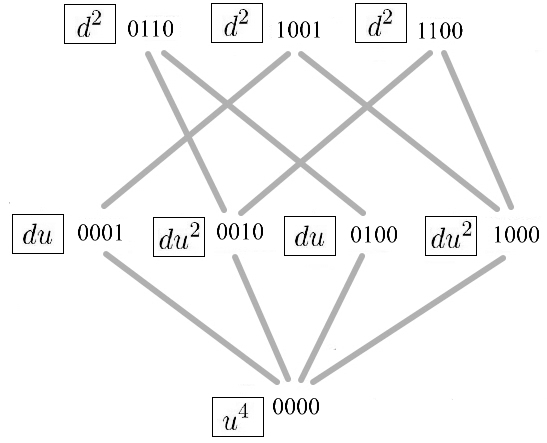}%\label{Poset_R4} 
	\end{center}
	\caption{The calculation of the bivariate up-down degree enumerator polynomial 
$ 3 d^2 + 2 d u + 2 d u^2 + u^4$ of $\R_4$.}
\label{fig:poset4}
\end{figure}

The  up-down degree enumerator polynomial is a refinement of the both up- and down-degree enumerator 
polynomials and of the degree enumerator polynomial $g_n(x)$. 
For instance the polynomial in~\eqref{up-down_R4} specializes to 
the first polynomial in~\eqref{R4polynomials} for $u=1$, to the second one for $ d=1$, and to 
the last for $ u=d=x$.
More generally, the generating function of the up-down degree enumerator polynomials for 
$\R_n$ specializes to the generating functions~\eqref{GFdown} and~\eqref{GFup} as corollaries, 
and provide the generating function for the degree enumerator polynomials $\{ g_n(x) \}_{n \geq 1}$ 
itself for $ u=d=x$. 

We define the generating function $GF$ 
of the up-down degree enumerator polynomials of $\R_n$ 
formally as follows:

\begin{definition}\label{GFdefinition}
The generating function of the sequence of the up-down degree enumerator polynomials 
is defined as
\begin{equation}\label{GF}
GF = GF(u,d; t) = \sum_{n\geq 1} t^n \sum_{v\in \R_n } u^{\deg_{up}(v)} d^{\deg_{down}(v)} ~.
\end{equation}
\end{definition}

\begin{example}
For the graphs $ \R_1 $ through $\R_8$, the 
up-down degree enumerator polynomials are as follows:

%{\small 
	\begin{eqnarray*}
		&& d + u \\
		&& 2 d + u^2 \\
		&& d + d^2 + 2 d u + u^3 \\
		&& 3 d^2 + 2 d u + 2 d u^2 + u^4 \\
		&& 5 d^2 + 2 d^2 u + 3 d u^2 + 2 d u^3 + u^5 \\
		&& 4 d^2 + 2 d^3 + 6 d^2 u + 2 d^2 u^2 + 4 d u^3 + 2 d u^4 + u^6 \\
		&& 3 d^2 + 7 d^3 + 5 d^2 u + 9 d^2 u^2 + 2 d^2 u^3 + 5 d u^4 + 2 d u^5 + u^7 \\
		&& 2 d^2 + 10 d^3 + d^4 + 4 d^2 u + 8 d^3 u + 7 d^2 u^2 + 12 d^2 u^3 + 2 d^2 u^4 + 6 d u^5 + 2 d u^6 + u^8 
		%&& 2 d^2 + 12 d^3 + 6 d^4 + 2 d^2 u + 12 d^3 u + 6 d^2 u^2 + 12 d^3 u^2 + 10 d^2 u^3 + 15 d^2 u^4 + 2 d^2 u^5 + 7 d u^6 + 2 d u^7 + u^9 
	\end{eqnarray*}
%}
\end{example}

\section{Decomposition of run-constrained strings and a preparatory result}
\label{sec:decomposition}
In this section, we present some preliminary results about formal power series, 
which are needed for the computation of our  up-down degree enumerators and their
generating function $GF$.

We also need another description of run-constrained 
binary strings. Let 
$$
S = \{ 100, 11000, 1110000, \cdots\} ~.
$$
%, and thus $R = \{0\} \cup S$. 
Every run-constrained binary string consists of words from $S$ 
interspersed with runs of $0$s, including a prefix and a 
suffix which may also be runs of $0$s. 
Let $s^*$ denote an arbitrary string of zero or more words from $S$, 
and $s^+$ denote an arbitrary string of one or more words from $S$. 
So we have $s^+ = s s^*$, and as an example, $s^2 s^*$ denotes all 
strings consisting of two or more words from $S$. 
Every non-trivial run-constrained binary string 
can be written in the form
$$
0^{i_0} s^+ 0^{i_1} s^+ 0^{i_2} \cdots 0^{i_k} s^+ 0^{i_{k+1}},
$$
where $ k \geq 0$, $i_0, i_{k+1} \geq 0$, 
$i_1 , i_2, \ldots , i_k  \geq 1 $. The runs 
$0^{i_1},   0^{i_2}, \ldots , 0^{i_k}$ are called 
{\em internal runs}, the initial string $0^{i_0}$ is the {\em pre-run}, 
and the final string $0^{i_{k+1}}$ is the {\em post-run} of the word. 
Note that for the latter two, 
we do not rule out the possibility that they have length 
zero (i.e.~$i_0 = 0 $ or $i_{k+1} = 0 $.)
So in that sense they are not ``real'' runs like 
the interior runs of the string, which are the portions in between the letters of $S$ that appear in the 
word, and must have positive length. 

Note that the up-down generating function $G$ of the words in $S$ itself is
\begin{equation}\label{G}
G= d t^3 + d^2 t^5 + d^2 t^7 + \cdots = dt^3+ \frac{d^2 t^5}{1-t^2} ~.
\end{equation}

We also make us of the following two preparatory results. Consider the alphabet 
$\Sigma = \{a,b\}$ and let $ \Sigma^*$ denote all words over $\Sigma$. 
The length of 
$u \in \Sigma^*$ is denoted by $|u|$. Let $|u|_a$ and $|u|_b$ denote the number of 
occurrences of $a$ and $b$ in $ u $, respectively. Let also
$|u|_{aa}$,
$|u|_{ab}$,
$|u|_{ba}$,
$|u|_{bb}$ denote the number of appearances of the words 
$aa, ab, ba, bb$ in $u$, respectively.
For $ n \geq 0$, let 
$ a \Sigma^n a = \{ a w a ~|~ w \in \Sigma^*, ~ |w|=n \}$. Similarly, we define 
the sets of strings
$ a \Sigma^n b$,
$ b \Sigma^n a$, and
$ b \Sigma^n b$.

For a word $u$ with $|u| \geq 2 $, define
\begin{equation}\label{monomial}
m(u) = x^{|u|_a} y^{|u|_b}
\alpha^{|u|_{aa} +|u|_{ba}}
\beta^{|u|_{ab} +|u|_{bb}} ~.
\end{equation}

\begin{example}
For the word $u = aababbaaa \in a \Sigma^7 a $, we have 
$|u| = 9$, 
$|u|_a = 6$, 
$|u|_b = 3$, 
$|u|_{aa} + |u|_{ba} = 5$, 
$|u|_{ab} + |u|_{bb} = 3$, and consequently 
$$
m(u) = x^6 y^3  \alpha^5 \beta^3 ~.
$$
\end{example}

Next, we prove the following proposition, to be used for the calculation of the  
generating function $GF$ of the  up-down degree enumerator polynomials.

\begin{proposition}
	\label{prop:words}
	Let $n \geq 0$ be an integer and let $w \in \Sigma^*$, where $\Sigma = \{a,b\}$. Then
	\begin{eqnarray}\label{sum1}
	%\sum_{\substack{u = a w a \\ |w|=n} } m(u) &=& \alpha x^2 (\alpha x + \beta y )^n \\
	%\sum_{\substack{u = a w b \\ |w|=n} } m(u) &=& \beta x y  (\alpha x + \beta y )^n \\
	%\sum_{\substack{u = b w a \\ |w|=n} } m(u) &=& \alpha x y  (\alpha x + \beta y )^n \\
	\sum_{|w|=n } m(awa) &=& \alpha x^2 (\alpha x + \beta y )^n , \label{sum2} \\
	\sum_{|w|=n}  m(awb) &=& \beta x y  (\alpha x + \beta y )^n ,\label{sum3} \\
	\sum_{|w|=n}  m(bwa) &=& \alpha x y  (\alpha x + \beta y )^n ,\label{sum4}\\
	\sum_{|w|=n } m(bwb) &=& \beta y^2 (\alpha x + \beta y )^n .
	\end{eqnarray}
\end{proposition}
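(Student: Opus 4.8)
The plan is to prove all four identities at once by exhibiting a \emph{factorization} of the monomial $m(u)$ over the letters of $u$, after which each sum collapses into a product of independent single-letter sums. The whole argument rests on a single observation about~\eqref{monomial}: the exponent of $\alpha$ counts the adjacent pairs $u_{i-1}u_i$ whose \emph{second} letter is $a$ (namely $aa$ and $ba$), and the exponent of $\beta$ counts those whose second letter is $b$ (namely $ab$ and $bb$). Thus the $\alpha/\beta$ contribution of each adjacent pair depends only on its right-hand letter.

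First I would record the factorization. For a word $u = u_1 u_2 \cdots u_\ell$ with $\ell \ge 2$, assign to $a$ the weights $c(a) = x$, $p(a) = \alpha$ and to $b$ the weights $c(b) = y$, $p(b) = \beta$. Since $x^{|u|_a} y^{|u|_b} = \prod_{i=1}^\ell c(u_i)$ tracks the letters and, by the observation above, $\alpha^{|u|_{aa}+|u|_{ba}}\beta^{|u|_{ab}+|u|_{bb}} = \prod_{i=2}^\ell p(u_i)$ tracks the pairs indexed by their right endpoint, we obtain
$$
m(u) = c(u_1) \prod_{i=2}^{\ell} c(u_i)\, p(u_i) = c(u_1) \prod_{i=2}^{\ell} \omega(u_i),
$$
where $\omega(a) = c(a)p(a) = \alpha x$ and $\omega(b) = c(b)p(b) = \beta y$. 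In words: the first letter contributes $x$ or $y$, while every later letter contributes $\alpha x$ or $\beta y$.

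Next I would apply this to each sum. For $u = awa$ of length $n+2$, the leading $a$ contributes $c(a)=x$, the trailing $a$ contributes $\omega(a)=\alpha x$, and the $n$ interior letters $w_1\cdots w_n$ contribute $\prod_{i=1}^n \omega(w_i)$, so that $m(awa) = \alpha x^2 \prod_{i=1}^n \omega(w_i)$. Summing over all $w\in\Sigma^n$ and using that a sum of products over words factors into a product of per-position sums,
$$
\sum_{|w|=n} m(awa) = \alpha x^2 \sum_{|w|=n} \prod_{i=1}^n \omega(w_i) = \alpha x^2 \bigl(\omega(a)+\omega(b)\bigr)^n = \alpha x^2 (\alpha x + \beta y)^n,
$$
which is~\eqref{sum2}. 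The remaining three identities follow verbatim; only the two boundary factors change, producing the prefactors $c(a)\omega(b)=\beta x y$, $c(b)\omega(a)=\alpha x y$, and $c(b)\omega(b)=\beta y^2$ in front of $(\alpha x+\beta y)^n$, matching~\eqref{sum3},~\eqref{sum4} and the final line.

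I expect the only real obstacle to be the bookkeeping in the factorization step, namely the correct pairing of each adjacency factor with a single letter. One must verify that assigning the pair $u_{i-1}u_i$ to its right endpoint $i$ uses each of the $\ell-1$ pairs exactly once, and that the base case $n=0$ is consistent: there $w$ is empty, $awa=aa$ with $m(aa)=\alpha x^2$, matching $\alpha x^2(\alpha x+\beta y)^0 = \alpha x^2$. Once the factorization is in place, the four evaluations are immediate.
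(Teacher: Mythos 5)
Your proof is correct and takes essentially the same approach as the paper's: both rest on the key observation that each adjacent pair contributes $\alpha$ or $\beta$ according to its right-hand letter, so that $m(awa)$ factors as $\alpha x^2$ times a product of per-letter weights $\alpha x$ or $\beta y$ over the positions of $w$. The only (cosmetic) difference is that the paper groups the words $w$ by their number of $a$'s and invokes the binomial theorem $\sum_k \binom{n}{k}(\alpha x)^k(\beta y)^{n-k} = (\alpha x+\beta y)^n$, whereas you collapse the sum position-by-position; your uniform factorization lemma also covers all four identities at once, which streamlines the paper's ``the proofs of the other identities are similar.''
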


\begin{proof}
Consider the first identity. For $ n =0$, 
there is only one word $ aa$, and both sides are 
$\alpha x^2$ in this case. 
If $ n >0$ and $ u = a w a$, then  note that the number 
$|u|_{aa} +|u|_{ba} $ is the number of $a$'s in $ w a$ and 
$|u|_{ab} +|u|_{bb}$ is the number of $b$'s in $w$. Given a word $w$ with 
$|w|_a = k $ and $|w|_b = n-k$, we calculate $m(u)$ as 
$$
m(u) =  x^{k+2} y^{n-k} \alpha^{k+1} \beta^{ n-k} =
\alpha x^2 x^{k} y^{n-k} \alpha^{k} \beta^{ n-k} ~.
$$
Since there are ${n \choose k }$ such strings $w$, we obtain
$$
\sum_{|w|=n } m(a ua ) = \alpha x^2 
\sum_{k=0}^n {n \choose k }
x^{k} y^{n-k} \alpha^{k} \beta^{ n-k} =
\alpha x^2 (\alpha x + \beta y )^n  ~.
$$
The proofs of the other identities are similar.
\end{proof}

By summing each identity in 
Proposition~\ref{prop:words} over all 
nonnegative integers $n$, we obtain the following formulas.

\begin{corollary}
	\label{cor:words}
	Let $\Sigma = \{a,b\}$ and $m$ be as defined in~\eqref{monomial}. Then
	\begin{eqnarray}\label{GFsum1}
	\sum_{w \in \Sigma^*} m(awa) &=& \frac{\alpha x^2}{1- (\alpha x + \beta y )} , \label{GFsum2}\\
	\sum_{w \in \Sigma^*}  m(awb) &=& \frac{\beta x y }{1- (\alpha x + \beta y )} , \label{GFsum3}\\
	\sum_{w \in \Sigma^*}  m(bwa) &=& \frac{\alpha x y}{1- (\alpha x + \beta y )} , \label{GFsum4}\\
	\sum_{w \in \Sigma^*} m(bwb) &=& \frac{\beta y^2}{1- (\alpha x + \beta y )} .
	\end{eqnarray}
\end{corollary}

%%%%%%%%%%%%%%%%%%%%%%%%%%%%%%%%%%%%%%%%%%
%%%%%%%%%%%%%%%%%%%%%%%%%%%%%%%%%%%%%%%%%%
\section{Up-down degree polynomials}
\label{sec:up-down}
%%%%%%%%%%%%%%%%%%%%%%%%%%%%%%%%%%%%%%%%%%
%%%%%%%%%%%%%%%%%%%%%%%%%%%%%%%%%%%%%%%%%%

In the general case we aim to keep track of the terms
\begin{equation}\label{stats}
u^{\deg_{up}(v) } d^{\deg_{down}(v)} t^{|v|} ~
\end{equation}
for every run-constrained binary string $v$. If we are to only keep 
track of the down-degree of a run-constrained binary string, then the problem is considerably 
simpler. In this case we are flipping $1$s in the string 
to $0$s, and the contribution of every word $s \in S$ in the 
observed run-constrained binary string $v$, independently of where 
it is located in $v$, is either 1 ($s = 100$) or 2 ($s \in S \setminus \{100\}$). 

The difficulty with keeping track of the up-degree arises in the following situations.
As an example
consider the subword $ s_1 0 s_1$ that appears somewhere 
in the string, where $s_1 = 100$, and use parentheses to highlight the words from $S$:
$$
\cdots (100) \, 0 \, (100) \cdots
$$
The 0 in the middle can be flipped to 1 in the case that the 100 on the right is 
followed by a  0:
$$
\cdots (100) \, 0 \, (100)  \, 0\cdots
$$
but not if the 100 on the right is followed by another word from $S$:
$$
\cdots (100) \, 0 \, (100)  \, (11 000) \cdots
$$
As another example, consider the last subword $s \in S$ in the word. It may be followed by zero or more $0$s. 
For example,
$$
\cdots (100), ~~
\cdots (100) \, 0, ~~
\cdots (100) \, 00, ~~
\cdots (100) \, 000,   ~~
\cdots (100) \, 0000,  \ldots
$$
In the first two cases, the 0 immediately to the right of 1 cannot be flipped to a 1 but in all 
other cases it can be. 
Additionally, whenever we have $r \geq 3$ trailing $0$s in a case like this, $r-2$ of 
those can be flipped to a $1$.

\section{Calculation of $GF$}
\label{sec:GF}
\begin{theorem}
	\label{thm:up-down}
	The generating function for the up-down degree enumerator polynomials of the graphs $\R_n$ is 
given by
	$GF = N_{u,d}/D_{u,d}$ where
	%$$
	%\frac{num_{u,d}}{den_{u,d}}
	%$$
{\small 
	\begin{eqnarray*}
		N_{u,d} &=& 
		(d+u)t -d (u-2)t^2 +(d^2-d-2u)t^3 -(d-2)d (u-2) t^4 \\
&& -(d-1)(u-d+d u)t^5 -d (d+u-2) t^6 +d (1-2 d + 2 d^2 +d u-2 d^2 u) t^7 \\
&& -2(d-1)d^2 (u-1) t^8 -(d-1)d^2(d+1) (u-1) t^9  -(d-1)^2 d^2 (u-1) t^{10} \\
&& -(d-1)^2 d^2 (u-1) t^{11} ~,
	\end{eqnarray*}
}
	and 
{\small
	\begin{eqnarray*}
		D_{u,d} &=& 
		1 -u t -2 t^2 +(2u -d)t^3 + t^4 +(2d -d^2 -u) t^5 +d(d u -1) t^7 \\
		&& + 2 (d-1) d^2 (u-1) t^9 +(d-1)^2 d^2 (u-1) t^{11} ~.
	\end{eqnarray*}
}
\end{theorem}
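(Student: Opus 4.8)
The plan is to compute $GF$ from the canonical factorization $0^{i_0} s^+ 0^{i_1} s^+ \cdots 0^{i_k} s^+ 0^{i_{k+1}}$ of a run-constrained string $w00$, treating the down-degree and the up-degree separately. The down-degree is the benign direction: flipping a $1$ to a $0$ only lengthens runs of $0$s and so can never break the run constraint outside the word $s \in S$ containing that $1$. A one-line check shows that each $s = 1^j 0^{j+1}$ has exactly one legal down-flip when $j = 1$ and exactly two when $j \ge 2$, so the down-degree factors over the letters of $S$ and each such letter contributes precisely the generating function $G$ of \eqref{G}, regardless of its position. Thus the entire down-degree dependence is already carried by $G$.

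The up-degree carries all the difficulty, and I would analyze it first in purely local terms. Fix a maximal run $0^r$ of zeros lying between a preceding $1$-run $1^{j_L}$ and a following $1$-run $1^{j_R}$, and consider flipping its $q$-th zero. There are three cases: $q = 1$ extends $1^{j_L}$ rightward and is legal exactly when $r \ge j_L + 3$; $q = r$ merges the new $1$ into $1^{j_R}$ and is legal exactly when $r \ge j_L + 2$ and the word carrying $1^{j_R}$ is itself followed by at least one further zero; and $1 < q < r$ inserts an isolated $1$, legal for $j_L + 2 \le q \le r - 2$, i.e.\ at $\max(0,\, r - j_L - 3)$ positions. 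The pre-run and the post-run (the latter always containing the fixed trailing $00$) are handled by the same analysis with one of the bounding $1$-runs absent. These are exactly the neighbour-dependences illustrated in Section~\ref{sec:up-down}: the $q = r$ merge flip depends on the word two steps to the right, so the up-degree is not a product of independent local factors.

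To linearize these interactions I would read the $S$-words of the string, in order, as a word over $\Sigma = \{a, b\}$, the two letters recording the local type of each word that is relevant to its neighbours (the short word $100$ versus a longer word, together with whether it ends a block). I would then choose the substitutions of $x, y, \alpha, \beta$ in terms of $u, d, t$ so that $x^{|\cdot|_a} y^{|\cdot|_b}$ records the per-word length, down-degree and self up-flip contributions, while $\alpha^{|\cdot|_{aa} + |\cdot|_{ba}} \beta^{|\cdot|_{ab} + |\cdot|_{bb}}$ records exactly the adjacent-pair up-degree contributions coming from the extend and merge flips. Proposition~\ref{prop:words} and Corollary~\ref{cor:words} then collapse the sum over all internal word sequences into closed rational form, the four variants $awa, awb, bwa, bwb$ supplying the four combinations of first/last word type needed to attach the boundary factors arising from the pre-run, the post-run, and the internal runs $0^{i_\ell}$.

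Finally I would take the geometric sum over the number of blocks together with the generating functions of the internal runs and the pre-/post-run corrections, assemble everything into a single quotient, and simplify. The main obstacle is the up-degree boundary bookkeeping: getting the merge-flip contribution and the trailing-zero count exactly right at block junctions and at the end of the string, where the legal-flip count depends on non-local data, is where essentially all the care is needed. The subsequent simplification to the stated eleven-term numerator $N_{u,d}$ and degree-$11$ denominator $D_{u,d}$ is long but routine, and the explicit up-down polynomials for $\R_1$ through $\R_8$ give an independent numerical check of both the derivation and the final formula.
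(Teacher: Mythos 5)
Your plan is, in its architecture, the paper's own proof: the same decomposition into pre-run, blocks of $S$-words, internal runs and post-run; the same local classification of legal up-flips in a maximal zero-run (your three cases $q=1$, $1<q<r$, $q=r$, including the non-local merge condition, are exactly right); the same substitutions $x,y,\alpha,\beta$; and the same appeal to Proposition~\ref{prop:words} and Corollary~\ref{cor:words}. The down-degree reduction to the factor $G$ of \eqref{G} is also correct. The genuine flaw is in your linearization step. You encode one letter per $S$-word, with the letters recording ``the short word $100$ versus a longer word, together with whether it ends a block.'' The $100$-versus-longer distinction is irrelevant to the up-degree coupling: by your own local analysis, the extend flip is legal precisely when $r\geq j_L+3$ and there are $\max(0,r-j_L-3)$ interior insert flips, and since $r=j_L+1+i$ (with $i$ the length of the internal run of extra zeros) the dependence on $j_L$ cancels; likewise the merge condition depends only on whether the word carrying $1^{j_R}$ is followed by a further zero, never on $j_R$. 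Word lengths enter only through $G$. The dichotomy the letters must record is at the level of maximal blocks of $S$-words between internal runs: a single-word block (letter $a$, weight $x=G$) versus a block of two or more words (letter $b$, weight $y=G^2/(1-G)$), because this is exactly what decides whether the first word of the block is followed by a zero, hence whether the internal run preceding it contributes $\alpha=ut/(1-ut)$ or $\beta=t/(1-ut)$. With one letter per block, every adjacent pair of letters contributes $\alpha$ or $\beta$ according to its second letter, so a string's weight is precisely the monomial $m(\cdot)$ of \eqref{monomial} and Proposition~\ref{prop:words} applies. With one letter per word, as you propose, consecutive words inside a block have no internal run between them (and, as your own analysis shows, no legal flips in the run separating them), so such pairs must contribute $1$; the pair-weights then no longer have the form required by $m$, and the sums do not collapse via Proposition~\ref{prop:words} as claimed.

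Second, what you defer as ``boundary bookkeeping'' is not a routine tail of the argument but most of it, and it hides a point your setup cannot absorb as stated: when the last block is a single word, the last internal run contributes $\beta$ if the post-run is empty and $\alpha$ if it is not, so its weight is not a function of the letter pair at all. The paper resolves this by the regrouping $\beta+\alpha\,\frac{t}{1-ut}=\alpha\,\frac{u^{-1}}{1-ut}$, charging $\alpha$ to the pair and the factor $\frac{u^{-1}}{1-ut}$ to the post-run, and this is what produces the four distinct boundary products attached to the cases $awa$, $awb$, $bwa$, $bwb$. Moreover, strings with no internal run at all (all zeros, or a single block with pre- and post-run) do not fit that format and require a separate computation with its own subcases. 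Until the encoding is corrected and these boundary contributions are actually derived, the stated $N_{u,d}$ and $D_{u,d}$ do not follow; agreement with the polynomials for $\R_1$ through $\R_8$ is a useful sanity check but cannot substitute for these steps.
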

\begin{proof}	
	We consider the cases according to the type of word from $S$ 
that precedes the leftmost internal run of $0$s,  and the 
word from $S$ that follows the rightmost internal run of $0$s. There 
is also the case, where the run-constrained binary string does not have 
internal runs of $0$s. Thus	there are altogether five cases to consider. In 
the cases $A)$ through $D)$,
	$ k \geq 1$, ${i_0}, i_{k+1} \geq 0$, and $ i_1, i_2, \ldots, i_k >0$. Note that $s$ simply denotes a word from $S$, and this word may be different at different places in the below schematic description of the 
cases. Similarly, $s^2 s^*$ denotes a string of at 
least two words from $S$.
	\begin{enumerate}
		\item[$A)$]
		The string is of the form $ 0^{i_0} \, s \, 0^{i_1} \cdots 0^{i_k} \, s \, 0^{i_{k+1}}$ 
		\item[$B)$]
		The string is of the form $ 0^{i_0} \, s^2s^* \, 0^{i_1} \cdots 0^{i_k} \, s \, 0^{i_{k+1}}$ 
		\item[$C)$]
		The string is of the form $ 0^{i_0}\,  s\,  0^{i_1} \cdots 0^{i_k} \, s^2s^* \, 0^{i_{k+1}}$ 
		\item[$D)$]
		The string is of the form $ 0^{i_0} \, s^2 s^* \, 0^{i_1} \cdots 0^{i_k} \, s^2s^*\,  0^{i_{k+1}}$ 
		\item[$E)$] The string has no internal runs of $0$s.
	\end{enumerate}
	Let $GF_A$, $GF_B$, $GF_C$, $GF_D$, $GF_E$ denote the generating functions of the 
classes of strings in the cases~$A)$ through~$E)$, respectively.
	Then the generating function for the run-constrained binary strings 
	with the statistic~\eqref{stats} is 
	$$
	GFX = 
	GF_A+ GF_B+ GF_C+ GF_D+ GF_E,
	$$
	and the generating function $GF$ for the same statistic for $\R_n$ is (after getting rid of the strings $0$ and $00$, 
and removing the trailing $00$ in all strings)
\begin{equation}\label{GFX}
GF = 	(GFX- t - t^2)/t^2 ~.
\end{equation}
	
	To simplify the notation in the remaining part of the proof, we define 
the following quantities:
		\begin{eqnarray*}
			G& =&  dt^3+ \frac{d^2 t^5}{1-t^2}, \\
			\alpha & = & \frac{u t}{1-u t} , ~~~\beta  =  \frac{ t}{1-u t}, \\
			x & = & G , ~~~y  =  \frac{G^2}{1-G} ~.
		\end{eqnarray*}

\noindent
{\bf Calculation of $GF_E$}\\
	We first consider the calculation of $GF_E$, which is the most straightforward. 
	Since there are no internal runs of $0$s in case~$E)$, 
	we can partition the strings in~$E)$ into three 
classes and calculate the generating function for each.
	\begin{enumerate}
		\item
		{\em The extended Fibonacci string is all $0$s:}
		
		The generating function for these is 
		\begin{equation}\label{E1}
		t + \frac{t^2}{1- u t} ~.
		\end{equation}
		
		\item {\em The extended Fibonacci string has a single word from $S$}: 
		
		The generating function is 
		\begin{equation}\label{E2}
		\left(
		1 + \frac{2t}{1-u t} + \frac{t^2}{(1- u t)^2}
		\right) G ~.
		\end{equation}
		To see this,
		note that 
		words  in this set  
		are of the form
		$ 0^i s 0^j$ with $i,j \geq 0$.
		For $j=0$ the generating function is the product of $G$ and 
		$$
		1 + t + t^2 + u t^3 + u^2 t^4 + \cdots = 1+t+\frac{t^2}{1- u t} ~.
		$$
		For $j=1$ it is the product of $G$ and 
		$$
		t + u t^2 + u t^3 + u^2 t^4 + \cdots = t + u t^2 + \frac{u t^3}{1-u t} ~.
		$$
		For $j=2$ it is the product of $G$ and 
		$$
		u t^2 + u^2 t^3 + u^2 t^4 + u^3 t^5 + \cdots = 
u t^2 +  u^2 t^3 + \frac{u^3t^5}{1- u t} ~,
		$$
		and so on.
		Adding the fractional expressions over $j$ yields
		\begin{equation}\label{E11}
		\sum_{j \geq 0} \frac{ u^j t^{j+2}}{1-u t} = \frac{t^2}{1-ut} 
		\sum_{j\geq 0} u^j t^j =\frac{t^2}{(1- u t)^2 } ~.
		\end{equation}
		The sum of the pairs of terms $1+t$, $t + ut^2$, $ut^2 + u^2 t^3$, etc. 
over $j \geq 0 $ is calculated to be 
		\begin{equation}\label{E12}
		1 + \frac{2t}{1-u t}  ~.
		\end{equation}
		Adding the contributions of~\eqref{E11} and~\eqref{E12} proves~\eqref{E2}.
		
		\item {\em The extended Fibonacci string has two or more words from $S$}: 
		
		We prove that in this case the generating function is 
		\begin{equation}\label{E3}
		\left(
		1 + t + \frac{t+t^2}{1- u t} + \frac{t^2}{1-u t} + \frac{t^3}{(1- u t)^2} 
		\right)  \frac{G^2}{1-G} ~.
		\end{equation}
		The strings here are of the form
		$0^i s^2 s^* 0^j$ with $i,j \geq 0$. The generating function of 
		the strings of at least two words from $S$ is $ G^2/(1-G)$.
		
		We again calculate the contribution to the generating function for 
$j = 0, 1, 2, \ldots$.
		For $j=0$, we have 
		$$
		1+t + t^2 + u t^3 + u^2 t^4 + \cdots = 1+ t + \frac{t^2}{1- ut},
		$$
		for $j=1$
		$$
		t + t^2 + t^3 + ut^4 + u^2 t^5 + \cdots = t + t^2 + \frac{t^3}{1-u t},
		$$
		for $j=2$
		$$
		u t^2 + u t^3 + u t^4 + u^2 t^5 + \cdots = 
u t^2 + u t^3 + \frac{u t^4}{1-u t}~,
		$$
		etc. Adding the fractional terms gives
		$$
		\frac{t^2}{1- ut } + \sum_{j \geq 1 } \frac{ u^{j-1} t^{j+2}}{1- u t} = 
		\frac{t^2}{1-u t} + \frac{t^3}{(1- u t)^2} ~.
		$$
		The sum of the pairs of remaining terms that appear for each $j$ is 
		$$
		1 + \frac{t}{1- ut} + t + \frac{t^2}{1- u t}
		$$
		and adding these up gives~\eqref{E3}.
	\end{enumerate}
	
	Finally, adding up and simplifying the contributions of~\eqref{E1}, \eqref{E2} 
and \eqref{E3}, we obtain
	%{\small
		\begin{eqnarray}\label{GFE}
		GF_E & =  & 
		\frac{t(1+(1-u)t)}{1- u t} + 
		\frac{(1+(1- u)t)^2}{(1- u t)^2} G \\ \nonumber 
&& \hspace{2cm} +~ 
		\frac{(1+(1- u)t )(1+(1-u)t + (1-u) t^2)}{(1- u t)^2} 
		\frac{G^2}{1-G} ~.
		\end{eqnarray}
	%}

\noindent
{\bf Calculation of $GF_A$}\\
	We first consider the generating function
	$GF_A$ on an example. We will see that the observations made on it can be 
used on a general string. Take a word of the type
	\begin{equation}\label{typeA}
	0^{i_0} s 0^{i_1} s 0^{i_2} s^2 s^* 0^{i_3} s^2 s^* 0^{i_4} s 0^{i_5} s 0^{i_6}~.
	\end{equation}
	Here
	$k=5$, $i_0, i_6 \geq 0$, $i_1,i_2,i_3,i_4,i_5 > 0$.
	The contribution of the 
	pre-run in such a word is the factor
	$$
	1+ ut + \frac{u t^2}{1- u t} ~.
	$$
	Note that we are using the fact that the pre-run is followed by $s 0$.
	The contribution of the first interior run $0^{i_1}$ is 
	$$
	\alpha = \frac{ut}{1-ut}~
	$$
	as it is located in the context $ s 0^{i_1} s 0$.
	The contribution of the second interior run
	$0^{i_2}$ is 
	$$
	\beta = \frac{t}{1- ut}
	$$
	as it appears in the context $ s 0^{i_2} s^2 s^*$.
	Continuing, 
	the contribution of the third interior run
	$0^{i_3}$ is  
	$$ 
	\beta = 
	\frac{t}{1- ut}
	$$
	because
	it appears in the context $ s^2 s^* 0^{i_3} s^2 s^*$.
	The contribution of the fourth interior run
	$0^{i_4}$ is  
	$$
	\alpha= \frac{ut}{1- ut}
	$$
	as it appears in the context $ s^2 s^* 0^{i_4} s$.
	
	We can summarize the situation with the contribution of the internal runs as follows:
	\begin{eqnarray*}
		&& s 0^i s 0   \mbox{ and } s^2 s^*  0^i s0 \mbox{ contribute }  \alpha,\\
		&& s 0^i s^2s^*   \mbox{ and } s^2 s^*  0^i s^2s^* \mbox{ contribute }  \beta. 
	\end{eqnarray*}
	Of course each occurrence of $s$ contributes $x=G$ and each 
occurrence of $ s^2 s^*$ contributes 
	$y= G^2 / (1-G)$.
	For our example, this leaves the contribution of the last interior run and the post-run.
	Note that the last interior run contributes $\beta$ if the post-run has length zero, 
	and $\alpha$ is if the post-run has positive length. 
	In the first case the contribution of the post-run is 1, and in the second it is 
	$$
	\frac{t}{1- ut}~.
	$$
	Adding the two contributions, the last interior run and the post-run together contribute
	$$
	\beta + \alpha \frac{t}{1-ut} = \alpha \frac{u^{-1}}{1- u t} ~.
	$$
	Therefore we can ``charge" $\alpha$ as the contribution of the last 
interior run if we make the contribution of the post-run equal to
	$$
	\frac{u^{-1}}{1- u t} ~.
	$$
	We can simplify the situation further. Encode the strings of type~\eqref{typeA}  
	by the word 
	$$
	aabbaa
	$$
	over the two letter alphabet $ \{ a, b \}$, ignoring the runs of $0$s altogether, and 
	encoding $s$ by $a$ and $ s^2 s^*$ by $b$. Since we 
are in case~$A)$, these words start and end with the letter $a$. 
Then we have the following:
	\begin{enumerate}
		\item The contribution of the pre-run is 
		$$
		1+ ut + \frac{u t^2}{1- u t} .
		$$
		\item The contribution of the post-run is 
		\begin{equation}\label{post-run}
		\frac{u^{-1}}{1- u t} ~.
		\end{equation}
		\item The contribution of each letter $a$ is $x = G$.
		\item The contribution of each letter $b$ is $y = G^2/(1-G)$.
		\item The contribution of each letter pair $aa$ or $ba$ is $\alpha$.
		\item The contribution of each letter pair $bb$ or $ab$ is $\beta$.
	\end{enumerate}
	In our example $aabbaa$, there are four $a$'s, two $b$'s, two $aa$'s, one $ba$, 
	one $bb$ and one $ab$. So the generating function of the words encoded by 
	$aabbaa$ is 
	$$
	\left(1+ ut + \frac{u t^2}{1- u t} \right) \left(\frac{u^{-1}}{1- u t} \right) 
	x^4  y^2 \alpha^3 \beta^2 ~.
	$$
	Note that 
	$
	x^4  y^2 \alpha^3 \beta^2 = m(aabbaa)
	$ 
	as defined in~\eqref{monomial}. By~\eqref{sum1},
	$$
	\sum_{|w|=4 } m(awa) = \alpha x^2 (\alpha x + \beta y )^4 ~.
	$$
	Generalizing this results and using~\eqref{GFsum1}, the generating function $GF_A$ is 
	given by
	\begin{equation}\label{GFA}
	GF_A =
	\left(1+ ut + \frac{u t^2}{1- u t} \right) \left(\frac{u^{-1}}{1- u t} \right)  \frac{\alpha x^2}{1- (\alpha x + \beta y)}.
	\end{equation}

\noindent
{\bf Calculation of $GF_B$}\\
	Let us consider how this case differs from the computation of $GF_A$. This time the 
	contribution of the pre-run is 
	$$
	1+t+t^2 + ut^3 + u^2 t^4 + \cdots = 1+t + \frac{t^2}{1-ut} ~.
	$$
	Again the contribution of the post-run is taken to be~\eqref{post-run}. 
Each adjacent pair 
	$aa$ or $ba$ contributes $\alpha$,
	and each pair $bb$ or $ab$ contributes  $\beta$.
	In this case the encoding words start with $b$ and end with $a$.
	Using~\eqref{sum3} and~\eqref{GFsum3}, we find 
	\begin{eqnarray}\label{GFB}
	GF_B & = & 
	\left(1+t + \frac{t^2}{1-ut} \right)
	\left(\frac{u^{-1}}{1- u t} \right) \sum_{n \geq 0} \alpha x y ( \alpha x  + \beta y)^n
	\\ \nonumber
	& = & 
	\left(1+t + \frac{t^2}{1-ut} \right)
	\left(\frac{u^{-1}}{1- u t} \right)  \frac{\alpha x y }{1- (\alpha x + \beta y )} ~.
	\end{eqnarray}

\noindent
{\bf Calculation of $GF_C$}\\
	Here the contribution of the pre-run is as in case~$A)$, but the 
contribution of the post-run is 
	$$
	1 + \frac{t}{1- u t} ~.
	$$ The encoding words over $\{a,b\}$ start with $a$ and end with $b$. Therefore
	by~\eqref{GFsum2}
	\begin{equation}\label{GFC}
	GF_C = 
	\left(1+ ut + \frac{u t^2}{1- u t} \right)
	\left(1 + \frac{t}{1- u t} \right) 
	\frac{\beta x y }{1- (\alpha x + \beta y )} ~.
	\end{equation}

\noindent
{\bf Calculation of $GF_D$}\\
	In this case the contribution of the pre-run is such as in case~$B)$, and the contribution of the post-run is as in 
case~$C)$. The encoding words over $\{a,b\}$ 
start and end with $b$. Therefore by~\eqref{GFsum4}, we have 
	\begin{equation}\label{GFD}
	\left(1+t + \frac{t^2}{1-ut} \right)
	\left( 1 + \frac{t}{1- u t} \right) 
	\frac{\beta y^2 }{1- (\alpha x + \beta y)}~.
	\end{equation}
	
	Finally, adding up the contributions from~\eqref{GFA}, \eqref{GFB}, \eqref{GFC}, 
\eqref{GFD}, \eqref{GFE}  (by Mathematica) 
	gives the generating function $GFX$ and via~\eqref{GFX}, the generating function 
$GF$ given in the theorem.
\end{proof}

\section{Consequences of the up-down degree enumerator}
\label{sec:con}

The degree enumerator 
polynomials $g_n(x)$ for $\R_1$ through $\R_{10}$ (computed by Mathematica) are as follows:

%{\small
	\begin{eqnarray} \nonumber
	g_1(x) &=& 2x \\ \nonumber
	g_2(x) &=& x^2+2x \\ \nonumber
	g_3(x) &=& x^3+3x^2+x \\ \nonumber
	g_4(x) &=& x^4+2x^3+5x^2 \\ \nonumber
	g_5(x) &=& x^5+2x^4+5x^3+5x^2\\ \label{degree_polynomials} 
	g_6(x) &=& x^6+2x^5+6x^4+8x^3+4x^2\\  \nonumber
	g_7(x) &=& x^7+2x^6+7x^5+9x^4+12x^3+3x^2\\ \nonumber
	g_8(x) &=& x^8+2x^7+8x^6+12x^5+16x^4+14x^3+2x^2\\ \nonumber
	g_9(x) &=& x^9+2x^8+9x^7+15x^6+22x^5+24x^4+14x^3+2x^2\\ \nonumber
	g_{10}(x) &=& x^{10}+2x^9+10x^8+18x^7+30x^6+32x^5+39x^4+10x^3+2x^2 \\ \nonumber
	%g_{11}(x) &=& x^{11}+2x^{10}+11x^9+21x^8+39x^7+48x^6+57x^5+42x^4+10x^3+2x^2 \nonumber
	%g_{12}(x) &=& x^{12}+2x^{11}+12x^{10}+24x^9+49x^8+68x^7+81x^6+84x^5+46x^4+8x^3+2x^2  \nonumber
	%g_{13}(x) &=& x^{13}+2x^{12}+13x^{11}+27x^{10}+60x^9+92x^8+118x^7+128x^6+118x^5+39x^4+10x^3+2x^2\\
	%g_{14}(x) &=& x^{14}+2x^{13}+14x^{12}+30x^{11}+72x^{10}+120x^9+166x^8+194x^7+201x^6+134x^5+43x^4+8x^3+2x^2
	\end{eqnarray}
%}

Making the substitutions $ u \rightarrow x$ and $ d \rightarrow x$ in the 
generating function of the 
up-down degree enumerator polynomials, 
we find the generating function of the degree enumerator 
polynomials of the $\R_n$ to be as follows.

\begin{theorem}\label{degreegf}
	The generating function  for the degree enumerator polynomials of Fibonacci-run graphs
	$$
	f(t,x) = \sum_{n \geq 1} 
	g_n(x) t^n 
	$$
	is given in closed form by $ N_x/D_x $, where
	\begin{eqnarray*}
		N_x & = & x t 
		\Big( 2 -(x-2)t +(x-3)t^2 -(x-2)^2 t^3 - x (x-1) t^4 
		-2 (x-1) t^5   \\ 
		&& ~~~~  - (x-1)(2 x^2-x+1) t^6  
		-2 x (x-1)^2 t^7
		-x (x-1)^2 (x+1) t^8 \\
		&&~~~~ ~~~~~ - x (x-1)^3 t^9 - x (x-1)^3 t^{10} \Big) 
	\end{eqnarray*}
	and 
	\begin{eqnarray*}
		D_x & = & 1 - x t -2 t^2 + x t^3 + t^4 -x(x-1)t^5 
		+x (x-1)(x+1) t^7 \\
		&& ~~~~ + 2 x^2 (x-1)^2 t^9 + x^2 (x-1)^3 t^{11}.
	\end{eqnarray*}
\end{theorem}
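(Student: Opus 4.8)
The plan is to obtain $f(t,x)$ as a single specialization of the bivariate generating function $GF(u,d;t)$ already computed in Theorem~\ref{thm:up-down}, rather than redoing the five-case decomposition of Section~\ref{sec:GF}. The key conceptual observation is that for every vertex $v$ we have $\deg(v) = \deg_{up}(v) + \deg_{down}(v)$, so that
$$
u^{\deg_{up}(v)} d^{\deg_{down}(v)} \Big|_{u=d=x} = x^{\deg_{up}(v) + \deg_{down}(v)} = x^{\deg(v)}.
$$
Summing this over $v \in \R_n$ and multiplying by $t^n$ identifies the degree enumerator polynomial $g_n(x)$ of Definition~\ref{gdefinition} with the up-down enumerator of $\R_n$ evaluated at $u = d = x$. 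Consequently $f(t,x) = GF(x,x;t)$, that is, $f(t,x) = N_{u,d}/D_{u,d}$ with both $u$ and $d$ replaced by $x$, and the whole content of the theorem is to record the result of this substitution in simplified and factored form.

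First I would substitute $u = d = x$ into the denominator $D_{u,d}$ of Theorem~\ref{thm:up-down} and simplify coefficient by coefficient. For example the $t^3$ coefficient $(2u-d)$ collapses to $x$, the $t^5$ coefficient $(2d - d^2 - u)$ becomes $x - x^2 = -x(x-1)$, the $t^7$ coefficient $d(du-1)$ becomes $x(x^2-1) = x(x-1)(x+1)$, and the $t^9$ and $t^{11}$ coefficients $2(d-1)d^2(u-1)$ and $(d-1)^2 d^2(u-1)$ become $2x^2(x-1)^2$ and $x^2(x-1)^3$; these reproduce $D_x$ exactly. I would then apply the same substitution to the numerator $N_{u,d}$, again working one power of $t$ at a time and factoring the resulting polynomials in $x$. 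After extracting the common factor $xt$, nearly every coefficient reduces immediately to the factored form appearing inside the parentheses of $N_x$. The only coefficient requiring a short computation is the $t^7$ term, where $d(1 - 2d + 2d^2 + du - 2d^2u)$ specializes to $x(1 - 2x + 3x^2 - 2x^3)$; factoring out $-x(x-1)$ confirms that this equals the claimed $-x(x-1)(2x^2 - x + 1)$.

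There is no genuine obstacle here beyond the bookkeeping. Since $D_{u,d}$ has constant term $1$ as a power series in $t$, the quotient $N_{u,d}/D_{u,d}$ expands in $\mathbb{Z}[u,d][[t]]$, and the substitution $u=d=x$ is a well-defined ring map that commutes with forming this quotient; hence $f(t,x) = N_x/D_x$ with the stated $N_x$ and $D_x$. The one point worth double-checking is that no common factor in $t$ or in $x$ has been inadvertently cancelled between $N_x$ and $D_x$, which I would confirm by verifying that the low-order Taylor expansion of $N_x/D_x$ reproduces the polynomials $g_1(x), \ldots, g_{10}(x)$ listed in~\eqref{degree_polynomials}. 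This expansion check also serves as an independent numerical sanity test on the entire computation of $GF$ in Theorem~\ref{thm:up-down}.
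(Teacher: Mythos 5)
Your proposal is correct and is essentially identical to the paper's own argument: the paper derives Theorem~\ref{degreegf} precisely by making the substitutions $u \rightarrow x$ and $d \rightarrow x$ in the up-down degree enumerator generating function of Theorem~\ref{thm:up-down}, which is justified because $\deg(v) = \deg_{up}(v) + \deg_{down}(v)$ for every vertex $v$. Your coefficient-by-coefficient verification of $N_x$ and $D_x$ (including the $t^7$ factorization) and the expansion check against the polynomials in~\eqref{degree_polynomials} are simply the explicit bookkeeping behind that same substitution.
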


\begin{remark}\label{degree_remark}
	To find the generating function of the number of vertices with degree $k$, 
	we can take $\frac{d^k}{dx^k} f(t,x)$, and then set $x =0$. The 
resulting series divided by $ k!$ is then the 
	generating function of the number of vertices of degree $k$ 
in $ \R_n$. We can denote this series by 
	\begin{equation}\label{kthderivativef}
	\frac{1}{k!}  D^k f(t,x) ~|_{x=0} ~.
	\end{equation}
	In other words, the coefficient of $t^n$ in~\eqref{kthderivativef} is the number of vertices 
of degree $k$ in $\R_n$.
\end{remark}

In the following examples we calculate the number of vertices of small 
degree in Fibonacci-run graphs.\\

\begin{example}
	\label{ex:deg2}
	We differentiate the generating function $f(t,x)$ of Theorem~\ref{degreegf} 
	with respect to $x$ twice using Mathematica, and then 
	set $ x=0$. Dividing the resulting expression by 2 gives
	$$
	\frac{1}{2}  D^2 f(t,x) ~|_{x=0} =
	%\frac{t^2 (1+t) (1+t+t^2-t^3-t^5)}{1-t} =
	t^2 + 3 t^3 +5 t^4 + 5 t^5 + 4t^6 + 3 t^7 + \frac{2 t^8}{1-t} ~,
	$$
	confirming that for $ n \geq 8$, $ \R_n$ has exactly two vertices of degree 2.
\end{example}

\begin{example}
	\label{ex:deg3}
	Continuing computing with Mathematica, we find
	$$
	\frac{1}{6}  D^3 f(t,x) ~|_{x=0} =
	t^3 + 2 t^4 +5t^5 + 8 t^6 + 12 t^7 + 14 t^8 + 14 t^9 + 10 t^{10} +
	\frac{10 t^{11}}{1-t^2} + \frac{8 t^{12}}{1-t^2}
	$$
	which means that for $n\geq 11$, the number of vertices of degree 3 in $\R_n$ is 10  if $n$ 
	is odd, and 8 if $n$ is even.
\end{example}

\begin{example}
	\label{ex:deg4}
	For $k=4$, we get
	\begin{eqnarray*}
		\frac{1}{24}  D^4 f(t,x) ~|_{x=0} & = &
		t^4 + 2 t^5 + 6 t^6 + 9 t^7 + 16 t^8 + 24 t^9 + 39 t^{10} + 42 t^{11} + 46 t^{12} \\
		&& ~~~~ + 39 t^{13} + 43 t^{14}  + 
		\frac
		{t^{15} ( 39 + 45 t - 35 t^2 -42 t^3 )}
		{(1-t^2)^2 }
	\end{eqnarray*}
	from which we compute that for $ n \geq 15$, the number of vertices of degree 4 in 
	$\R_n$ is $2n+9$ if $n$ is odd and $3n/2 + 21 $ if $n$ is even.
\end{example}

\begin{example}
	\label{ex:deg5}
Similar  calculations give that 
the number of vertices of degree $5$  in $ \R_n$ for $n\geq 18$ is 
$9n-1$ if $n$ is odd, and $ 12n-48$ if $n$ is even.
\end{example}

The presented examples lead to the following conjecture.

\begin{conjecture}\label{conjecture:GF}
We conjecture that for  a given $k$, the generating function of the number of 
vertices of degree $k$ in $ \R_n$ is of the form
$$
\frac{p_k(t)}{ (1 - t^2 )^{k+1} }
$$
where $ p_k(t)$ is a polynomial of degree $\half (15k +8)$ if $k$ is even, and of degree
$\half (15k+7)$ if $k$ is odd. 
\end{conjecture}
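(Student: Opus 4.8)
The plan is to work directly from the closed form $f(t,x)=N_x/D_x$ of Theorem~\ref{degreegf}. For fixed $k$, set $c_k(t):=[x^k]f(t,x)$; by Remark~\ref{degree_remark} this is exactly the generating function counting vertices of degree $k$ in $\R_n$. Write $N_x=\sum_j N_j(t)\,x^j$ and $D_x=\sum_j D_j(t)\,x^j$, so $N_x,D_x\in\mathbb{Q}[t][x]$ have $x$-degree $5$. Two features of these coefficients drive the whole argument: first, $N_0=0$ and the constant term is $D_0=1-2t^2+t^4=(1-t^2)^2$; second, the remaining coefficients have controlled $t$-degree, namely $\deg D_0=4$, $\deg D_1=\deg N_1=7$, and $\deg D_j=\deg N_j=11$ for $2\le j\le 5$. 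Comparing coefficients of $x^k$ in the identity $f\,D_x=N_x$ yields the finite-depth recurrence
\[
D_0\,c_k=N_k-\sum_{j=1}^{5}D_j\,c_{k-j},\qquad c_0=0 .
\]

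Next I would fix the shape of the generating function. Since applying the quotient rule $k+1$ times to $N_x/D_x$ and evaluating at $x=0$ introduces precisely the factor $D_0^{\,k+1}$ in the denominator, an induction on the recurrence shows $c_k=p_k(t)/D_0^{\,k+1}$ for a polynomial $p_k$; as $D_0=(1-t^2)^2$, this denominator is a power of $(1-t^2)$, the form predicted by the conjecture. Clearing denominators turns the recurrence into the purely polynomial recursion
\[
p_k=N_k\,D_0^{\,k}-\sum_{j=1}^{5}D_j\,D_0^{\,j-1}\,p_{k-j},
\]
valid for $k\ge1$ with $p_0=0$ and $N_k=0$ for $k\ge6$. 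Everything now reduces to controlling $\deg p_k$.

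The heart is a degree induction. Let $\delta_k$ denote the conjectured value $\half(15k+8)$ for even $k$ and $\half(15k+7)$ for odd $k$, and assume inductively $\deg p_{k-j}=\delta_{k-j}$. The $j$-th summand has degree $\deg D_j+4(j-1)+\delta_{k-j}$. Because $\delta_k-\delta_{k-2}=15=\deg D_2+4$, the term $j=2$ contributes degree exactly $\delta_k$, while the terms $j=3,4,5$ and the inhomogeneous term $N_kD_0^{\,k}$ are of strictly smaller degree once $k\ge 3$ (the cases $k=1,2$, where $N_kD_0^{\,k}$ itself attains the top degree, are handled directly against $g_1,\dots,g_{10}$ and Examples~\ref{ex:deg2}--\ref{ex:deg5}). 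For even $k$ the $j=2$ term is the unique dominant one, which settles the degree. The delicate case is odd $k$: there $\delta_{k-1}=\delta_k-7=\delta_k-\deg D_1$, so the $j=1$ term $-D_1p_{k-1}$ and the $j=2$ term $-D_2D_0\,p_{k-2}$ both reach degree $\delta_k$, and a bare degree count gives only $\deg p_k\le\delta_k$.

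The main obstacle is thus ruling out cancellation of these two top terms for odd $k$, and I would overcome it by strengthening the induction to carry the leading coefficient $\ell_k:=[t^{\delta_k}]p_k$. Using $\operatorname{lc}(D_0)=1$, $\operatorname{lc}(D_1)=\operatorname{lc}(D_2)=-1$, the recursion gives $\ell_k=\ell_{k-2}$ for even $k$ and $\ell_k=\ell_{k-1}+\ell_{k-2}$ for odd $k$, with base values $\ell_1=\ell_2=1$ computed from $p_1=N_1D_0$ and the dominant term $N_2D_0^{\,2}$ at $k=2$. Crucially the two contributions \emph{add} rather than compete, so solving the recurrence yields $\ell_k=1$ for even $k$ and $\ell_k=(k+1)/2$ for odd $k$; in particular $\ell_k\neq0$ always, no cancellation occurs, and $\deg p_k=\delta_k$ exactly. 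The only genuinely fussy part is the finite, mechanical verification that none of the subdominant summands $j=3,4,5$ nor $N_kD_0^{\,k}$ secretly ties for the low values of $k$, which is checked case by case against the tabulated data.
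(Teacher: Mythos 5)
First, a point of context: the statement you set out to prove is a \emph{conjecture} in the paper. The paper offers no proof of it, only the supporting computations in Examples~\ref{ex:deg2}--\ref{ex:deg5}, so your attempt must be judged entirely on its own merits. The mechanical core of your argument is correct, and I verified the facts it relies on: writing $D_x=\sum_j D_j(t)x^j$ and $N_x=\sum_j N_j(t)x^j$, one indeed has $D_0=(1-t^2)^2$, $N_0=0$, $\deg D_1=\deg N_1=7$, $\deg D_j=\deg N_j=11$ for $2\le j\le 5$, and leading coefficients $\operatorname{lc}(D_0)=1$, $\operatorname{lc}(D_1)=\operatorname{lc}(D_2)=-1$, $\operatorname{lc}(N_1)=\operatorname{lc}(N_2)=1$. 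Consequently your polynomial recursion for $p_k$, the degree bookkeeping ($\delta_k-\delta_{k-2}=15=\deg D_2+4$, the $j=1$ term tying the top degree exactly when $k$ is odd, the terms $j=3,4,5$ and $N_kD_0^k$ subdominant for $k\ge 3$), and the leading-coefficient recurrences $\ell_k=\ell_{k-2}$ (even $k$), $\ell_k=\ell_{k-1}+\ell_{k-2}$ (odd $k$) with $\ell_1=\ell_2=1$ are all sound; positivity of the $\ell_k$ rules out cancellation.

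The genuine gap is the denominator. Your induction produces
\[
c_k=\frac{p_k(t)}{D_0^{\,k+1}}=\frac{p_k(t)}{(1-t^2)^{2(k+1)}},
\]
and the parenthetical claim that this is ``the form predicted by the conjecture'' is false: the conjecture asserts exponent $k+1$, not $2(k+1)$, and since the polynomial numerator over a prescribed denominator is unique, these are different statements. Worse, the literal conjecture cannot be proved at all, because its two halves are incompatible with the paper's own data. By Example~\ref{ex:deg2}, $c_2=(t^2+2t^3+2t^4-t^6-t^7-t^8)/(1-t)$, so the unique numerator over $(1-t^2)^3$ is $(t^2+2t^3+2t^4-t^6-t^7-t^8)(1-t)^2(1+t)^3$, of degree $13$, not $\half(15\cdot 2+8)=19$; it is over $(1-t^2)^6$ that the numerator has degree exactly $19$. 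The same check at $k=3,4$ gives degrees $18$ and $24$ with exponent $k+1$, versus the conjectured $26$ and $34$, which reappear precisely with exponent $2(k+1)$. In other words, the conjecture's degree formula is calibrated to the denominator $D_0^{k+1}=(1-t^2)^{2(k+1)}$ that your method produces, and the stated exponent $k+1$ is evidently a slip in the paper. So what you have is, in substance, a correct proof of the corrected conjecture---a genuine contribution, since the paper proves nothing here---but as written your argument purports to establish the literal statement, and at that one step it is wrong. You should state and prove the result with denominator $(1-t^2)^{2(k+1)}$ and explicitly flag the discrepancy, rather than glossing ``a power of $(1-t^2)$'' as if it matched the conjectured form.
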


We already have the first few degree enumerator polynomials 
as given in~\eqref{degree_polynomials}. From 
the denominator of their generating function in Theorem~\ref{degreegf}, 
we get the following result.
\begin{theorem}
	If $g_n = g_n(x)$ is the degree enumerator polynomial for $ \R_n$ as defined in Definition 
\ref{gdefinition} with the 
	initial values given by~\eqref{degree_polynomials}, then for $ n \geq 12$
\begin{eqnarray*}
g_n & = & x g_{n-1} +2 g_{n-2} -x g_{n-3} - g_{n-4} +x(x-1) g_{n-5} -x (x^2-1) g_{n-7} \\
&& -2 x^2 (x-1)^2 g_{n-9} - x^2 (x-1)^3 g_{n-11} ~.
\end{eqnarray*}
\end{theorem}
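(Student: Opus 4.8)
The plan is to read the recurrence directly off the closed form $f(t,x) = N_x/D_x$ established in Theorem~\ref{degreegf}. Writing $f(t,x) = \sum_{n \geq 1} g_n(x)\, t^n$ and clearing the denominator turns that identity into
$$
D_x \cdot \sum_{n \geq 1} g_n(x)\, t^n = N_x,
$$
an equality of formal power series in $t$ with coefficients in $\mathbb{Z}[x]$. The entire argument is then a matter of comparing the coefficients of $t^n$ on the two sides.

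First I would expand $D_x$ as a polynomial in $t$ and record its coefficients $d_0, d_1, \ldots, d_{11} \in \mathbb{Z}[x]$. Reading them off from Theorem~\ref{degreegf} gives $d_0 = 1$, $d_1 = -x$, $d_2 = -2$, $d_3 = x$, $d_4 = 1$, $d_5 = -x(x-1)$, $d_6 = 0$, $d_7 = x(x^2-1)$, $d_8 = 0$, $d_9 = 2x^2(x-1)^2$, $d_{10} = 0$, and $d_{11} = x^2(x-1)^3$. With the convention $g_m = 0$ for $m \leq 0$, the coefficient of $t^n$ on the left-hand side is $\sum_{j=0}^{11} d_j\, g_{n-j}$, while on the right-hand side it is $[t^n]N_x$.

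The key observation is that $N_x$ is a polynomial in $t$ of degree $11$ (its leading contribution being the $xt \cdot (-x(x-1)^3 t^{10})$ term), so $[t^n]N_x = 0$ for every $n \geq 12$. For such $n$ the smallest index appearing, $n-11$, is at least $1$, so every $g_{n-j}$ is a genuine degree enumerator polynomial rather than zero-padding; the two conditions $[t^n]N_x = 0$ and ``all indices positive'' coincide precisely at $n \geq 12$. Hence for $n \geq 12$ one obtains the homogeneous relation $\sum_{j=0}^{11} d_j\, g_{n-j} = 0$, and solving for $g_n$ (using $d_0 = 1$) and substituting the values of the $d_j$ yields exactly the claimed recurrence.

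There is essentially no obstacle beyond this bookkeeping: the only point requiring care is the threshold $n \geq 12$, which is forced jointly by $\deg_t N_x = 11$ and by the requirement that no index $n-j$ drop to a nonpositive value, where the initial data in~\eqref{degree_polynomials} would no longer apply. One could alternatively phrase the argument without generating functions, checking the recurrence against the listed polynomials $g_1, \ldots, g_{11}$ and invoking uniqueness of the solution of a linear recurrence with prescribed initial terms; but deriving it from $D_x$ is cleaner and makes transparent why the characteristic polynomial of the recurrence is exactly $D_x$, with multiplication by $t$ playing the role of the index shift.
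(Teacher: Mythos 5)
Your proposal is correct and is exactly the paper's argument: the paper derives this recurrence directly from the denominator $D_x$ of the rational generating function in Theorem~\ref{degreegf}, which is precisely your coefficient-comparison after clearing denominators, with the threshold $n \geq 12$ forced by $\deg_t N_x = 11$. Your write-up just makes explicit the bookkeeping the paper leaves implicit.
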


Two other specializations of the generating function of the up-down degree enumerator polynomials are obtained in the following way.
\begin{enumerate}
\item Setting $u=1$, we obtain
the generating function of the down-degree enumerator polynomials  in~\eqref{GFdown}, 
which we had already computed~\cite[Proposition 7.1]{paper1}.
\item Setting $d=1$, we obtain the generating function of the up-degree enumerator polynomials shown in~\eqref{GFup}, also already computed in~\cite[Proposition 7.2]{paper1}.
\end{enumerate}

Note that Theorem~\ref{thm:up-down} can be used to recalculate another result.
The down-degree enumerator generating function differentiated with respect to $d$ is 
	$$
	\frac{
		t (1-t^2) ( 1+ (2d-1)t^2)
	}
	{
		(1 - t - t^2 -(d-1)t^3 - d (d-1) t^5)^2
	}
	$$
	which for $ d=1$ gives the generating function of the number of edges of $ \R_n$, already 
determined in~\cite{paper1} as 
	\begin{equation}\label{egf}
	\frac{t(1-t^4)}{(1-t-t^2)^2} ~.
	\end{equation}
Clearly, this result could also be obtained 
by differentiating the up-degree enumerator generating function with respect to $u$, and then evaluating it
at $u=1$.

\section{Fibonacci-run graphs as partially ordered sets}
\label{sec:poset}

Naturally, one might view the vertices of a Fibonacci-run graph as a partially ordered set 
(\emph{poset} for short). This poset is defined as $(\R_n, \leq )$, 
where the covering relation 
%for the poset $(\R_n, \leq )$ 
is given as follows: for $ u, v \in \R_n$, $v$ covers $u$ if and only if 
$v$ is obtained from $u$ by flipping a $0$ in $u$ to a $1$. 
%In other words, 
The binary relation ``$\leq$" is 
the transitive closure of this covering relation. 
We have shown the Hasse diagram of $\R_4$ in Figure~\ref{fig:poset}.
Figure~\ref{fig:poset6} depicts the Hasse diagram of $\R_6$.

Note that the number of vertices $v$ that cover a vertex $ u\in \R_n$ is precisely the 
up-degree
$\deg_{up} (u)$. 
The number of vertices $ u \in \R_n$ that are covered by  $ v$ is the 
down-degree $\deg_{down} (v)$ of the vertex $v$, as indicated in Figure~\ref{fig:poset4}.
% in the Hasse diagram of $ \R_4$. 

\begin{figure}[ht]
	\begin{center}
		\includegraphics[width=0.7\textwidth]{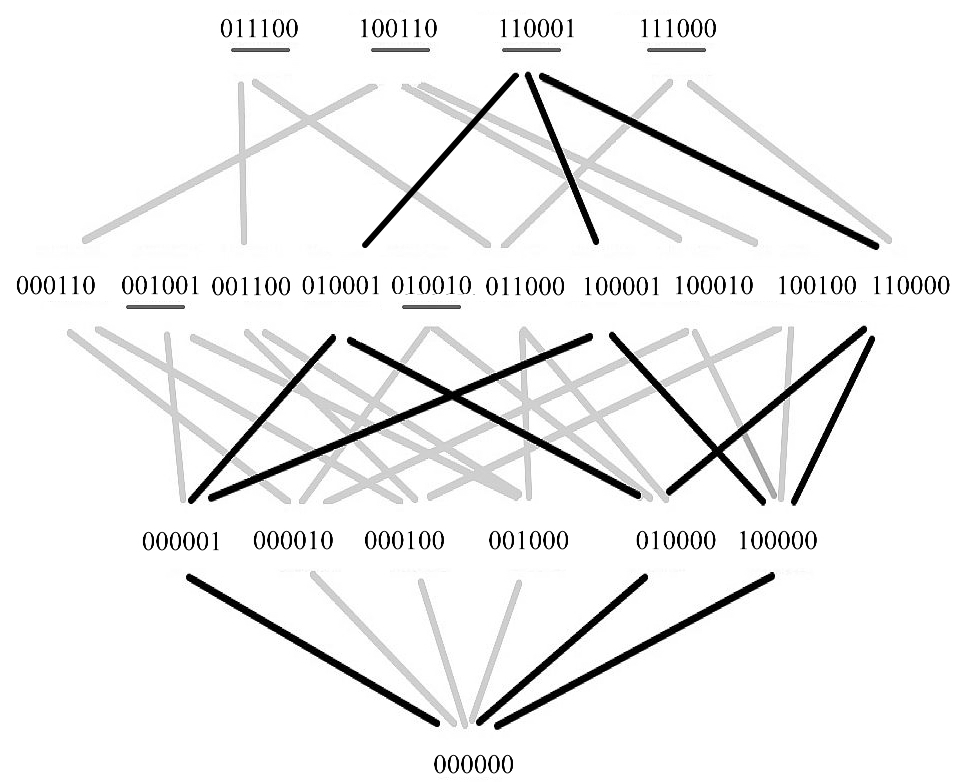}%\label{Poset_R6}
	\end{center}
	\caption{The Hasse diagram of the graph $\R_6$ as a poset. 
		The trailing $00$ of the vertex labels have been 
		truncated. The dark lines show the interval $[000000, 110001]$ which is isomorphic to 
		the 3-dimensional hypercube $Q_3$. The six maximal elements of the poset are underlined.}
	\label{fig:poset6}
\end{figure}

Notice that $(\R_n, \leq )$ is a ranked poset, where the rank of $u \in \R_n$ 
is its Hamming weight $| u |_1$.
Since the number of vertices in $\R_n$ of weight $w$ is given by
\begin{equation}\label{weight_formula}
{n-w+1 \choose w}
\end{equation}
for $ 0 \leq w \leq \lceil n/2 \rceil$~\cite[Corollary 3.2]{paper1}, the rank 
generating polynomial of the poset $\R_n$ is 
\begin{equation}\label{rank_generating}
F(\R_n, x) = \sum_{k =0}^{\lceil n/2 \rceil} 
{n-k+1 \choose k} x^k ~.
\end{equation}

Next we consider the maximal elements of $\R_n$ and determine its M\"{o}bius function.

\subsection{Maximal elements}
The maximal elements of a poset are those elements which are not 
smaller than any other element of the set. The following table lists the run-constrained 
binary strings of lengths $3, 4, \ldots , 8$ which correspond 
to maximal elements of $ \R_1 $ through $\R_6$,  viewed as a poset. 

\begin{center}
	\begin{tabular}{llllll}
		$ \R_1 $ &
		$\R_2 $ &
		$\R_3 $ &
		$\R_4 $ &
		$\R_5 $ &
		$\R_6 $  \\ \hline
		100 & 0100 & 00100 & 011000 & 0011000 & 00100100 \\
		&1000 & 11000& 100100 &0100100 & 01001000 \\
		&&& 110000 & 1000100 & 01110000\\
		&&&&1001000 & 10011000 \\
		&&&&1110000 & 11000100 \\
		&&&&&11100000
	\end{tabular}
\end{center}

The sequence $\{ M_n \}_{n \geq 1}$ of the number of maximal elements of $ \R_n$ starts as 
$$
1,2,2,3,5,6,10,13,20,27,40, 56, 80, \ldots
$$
We can obtain the generating function of this sequence by setting $ d=1 $ 
and $u=0$ in the up-down 
degree enumerator generating function in Theorem~\ref{thm:up-down}. This specialization 
gives the following result.

\begin{corollary}\label{maximals}
If $M_n$ denotes the number of maximal elements of the 
poset of $ \R_n$, then
the generating function of 
the sequence $\{ M_n \}_{n \geq 1}$ is given by
$$
\sum_{n \geq 1 } M_n t^ n = 
\frac{
	t ( 1+2t-2t^3+t^5+t^6)
}
{
	1 - 2 t^2 - t^3 + t^4 + t^5 - t^7
} ~.
$$
\end{corollary}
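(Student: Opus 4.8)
The plan is to derive the generating function for $\{M_n\}_{n \geq 1}$ directly as a specialization of the up-down degree enumerator generating function $GF$ established in Theorem~\ref{thm:up-down}. The key combinatorial observation is that a vertex $v \in \R_n$ is a maximal element of the poset precisely when no $0$ in $v$ can be flipped to a $1$ while remaining in $\R_n$; that is, when $\deg_{up}(v) = 0$. Since $GF$ tracks $u^{\deg_{up}(v)} d^{\deg_{down}(v)}$ summed over all vertices, extracting exactly the terms with $\deg_{up}(v) = 0$ amounts to setting $u = 0$, and collapsing the down-degree information (which is irrelevant to counting maximal elements) is accomplished by setting $d = 1$. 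Thus $\sum_{n \geq 1} M_n t^n = GF(0, 1; t)$, and the entire proof reduces to carrying out this substitution in the explicit rational function of Theorem~\ref{thm:up-down}.

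First I would substitute $u = 0$ and $d = 1$ into the numerator $N_{u,d}$ and denominator $D_{u,d}$ separately. In $N_{u,d}$, many terms carry a factor of $(u-1)$ or $(d-1)$ (which vanishes at $d=1$), so the high-order terms $t^8$ through $t^{11}$ drop out immediately, and the surviving low-order terms must be evaluated at $u=0$, $d=1$. A careful term-by-term evaluation should collapse $N_{0,1}$ to a polynomial matching $t(1 + 2t - 2t^3 + t^5 + t^6)$ after simplification. Similarly, in $D_{u,d}$ the terms in $t^9$ and $t^{11}$ contain the factor $(d-1)$ and vanish at $d=1$, while the remaining terms evaluated at $u=0$, $d=1$ should yield $1 - 2t^2 - t^3 + t^4 + t^5 - t^7$.

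The main obstacle, such as it is, is purely bookkeeping: ensuring that the polynomial arithmetic in both $N_{0,1}$ and $D_{0,1}$ is carried out without sign or coefficient errors, since several terms partially cancel. For instance, in the numerator the $t^3$ coefficient combines contributions from $(d^2 - d - 2u)t^3$ and, after the substitution, must be reconciled with the claimed form; likewise the $t^5$ and $t^6$ coefficients mix several monomials. I would verify the resulting rational function against the initial terms of the stated sequence $1, 2, 2, 3, 5, 6, 10, \ldots$ by expanding $GF(0,1;t)$ as a power series to sufficiently high order, which provides an independent check that the substitution and simplification were performed correctly.

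Finally, I would confirm the logical correctness of the specialization itself, namely that setting $u = 0$ in a formal power series in $u$ correctly isolates the constant term in $u$ (i.e.\ the coefficient of $u^0$), which counts exactly those vertices with $\deg_{up}(v) = 0$. Since $GF$ is a polynomial in $u$ and $d$ for each fixed power of $t$, this substitution is well-defined and extracts precisely the maximal elements. This completes the argument modulo the routine algebraic simplification described above.
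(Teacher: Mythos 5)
Your proposal is correct and is exactly the paper's own argument: the paper obtains Corollary~\ref{maximals} by setting $d=1$ and $u=0$ in the generating function of Theorem~\ref{thm:up-down}, on the observation that maximal elements are precisely the vertices with $\deg_{up}(v)=0$. The algebraic substitution indeed checks out, yielding $N_{u,d}|_{u=0,d=1}=t(1+2t-2t^3+t^5+t^6)$ and $D_{u,d}|_{u=0,d=1}=1-2t^2-t^3+t^4+t^5-t^7$ as claimed.
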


\subsection{The M\"{o}bius function}
We  can easily compute the M\"{o}bius function $\mu$ of the 
poset $(\R_n, \leq )$, since every interval 
$[u, v]$ is isomorphic to a cube (Boolean algebra) and therefore 
has the same M\"{o}bius function. Denoting the weights of $u$ and $v$ by 
$|u|_1$ and $|v|_1$, we have
$$
\mu (u, v ) =
\left\{
\begin{array}{ll}
(-1)^{|v|_1 - |u|_1 } & \mbox{ if } ~u \leq v ,\\
0 & \mbox{ if }  ~u \not\leq v .
\end{array}
\right.
$$

\section{Inversion generating function}
\label{sec:inv}
In this section, we present an observation about a combinatorial property of run-constrained binary strings that 
we use to define Fibonacci-run graphs.
Let us denote by $RC_n $ run-constrained binary strings of length $n \geq 0$. 
So for instance, adding the trailing 00 to the labels of the vertices of $\R_1 - \R_3$ shown in 
Figure~\ref{fig:fibonacci_run2}, gives 
\begin{eqnarray*}
RC_3 &  = &  \{000,100\} ~, \\
RC_4 &  = &  \{0000,0100,1000 \} ~, \\
RC_5 &  = &  \{00000, 00100, 01000, 10000, 11000 \} ~.
\end{eqnarray*}
For $ n \geq 0$, consider the inversion enumerator polynomial $ Q_n(x,q)$ defined by
\begin{equation}\label{inv}
Q_n(x, q) = \sum_{w \in RC_n} x^{|w|_1} q^{\inv(w)},
\end{equation}
where $\inv(w)$ denotes the number of inversions of $w$ and 
$|w|_1 $ is the Hamming weight, or the rank of $w$.
Recall that for $w = w_1 w_2 \cdots w_n$, $\inv(w)$ is the number 
of pairs $1 \leq i < j \leq n$ with $ w_i > w_j$.
If we omit to write down the dependence on $q$ in $Q(x,q)$ for notational convenience,
then $Q_0(x) = Q_1(x) = Q_2(x)=1$, and

\begin{eqnarray*}
	Q_3(x) &=& 1+ x,\\
	Q_4(x) &=& 1+ (q+1) x,\\
	Q_5(x) &=& 1+ \left(q^2+q+1\right) x+x^2,\\
	Q_6(x) &=& 1 +\left(q^3+q^2+q+1\right) x + \left(2 q^2+1\right) x^2,\\
	Q_7(x) &=& 1 +\left(q^4+q^3+q^2+q+1\right) x  + \left(2 q^4+q^3+2 q^2+1\right) x^2+x^3 ~.\\
	%Q_8(x) &=& 1+ \left(q^5+q^4+q^3+q^2+q+1\right) x +\left(2 q^6+q^5+3 q^4+q^3+2 q^2+1\right) x^2+ \left(q^4+2 q^3+1\right) x^3
	%Q_9(x) &=& \left(q^7+4 q^6+2 q^4+2
	%q^3+1\right) x^3+\left(q^6+q^5+q^4+q^3+q^2+q+1\right)
	%x+\left(2 q^8+q^7+3 q^6+2 q^5+3 q^4+q^3+2 q^2+1\right)
	%x^2+x^4+1 
\end{eqnarray*}

\begin{proposition}
	With $Q_0(x) = Q_1(x) = Q_2(x)=1$ and $ Q_{-n}(x) = 0$, we have 
	\begin{equation}\label{invg}
	Q_n(x) = \sum_{ k \geq 0} x^k Q_{n-1-2k } (x q^{k+1}).
	\end{equation}
\end{proposition}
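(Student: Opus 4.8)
The plan is to prove the recurrence combinatorially, by conditioning on the leftmost factor in the free-monoid decomposition of a run-constrained string. Recall from Section~\ref{sec:decomposition} that the generators of the monoid of run-constrained strings are exactly the words $1^k0^{k+1}$ for $k\ge 0$ (with $k=0$ giving the singleton $0$). Hence every nonempty $w\in RC_n$ has a unique first generator, so I would write $w = 1^k0^{k+1}\,w'$ with $w'\in RC_{n-1-2k}$, where $k$ ranges over $k\ge 0$; the requirement $n-1-2k\ge 0$ (equivalently $Q_{n-1-2k}=0$ for larger $k$, using $Q_{-m}=0$) makes the sum finite. Summing the monomial $x^{|w|_1}q^{\inv(w)}$ over all $w\in RC_n$ then splits as a sum over $k$ of the contribution of the strings whose first generator is $1^k0^{k+1}$.

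The heart of the argument is to track how the two statistics transform under this factorization. For the weight this is immediate: $|w|_1 = k + |w'|_1$, which produces the prefactor $x^k$. For the inversion statistic I would argue that removing the leading block $1^k0^{k+1}$ leaves the inversions internal to $w'$ untouched and creates no inversion involving the $k$ ones of the block (these stand at the very front), while each of the $k+1$ zeros of the block precedes every $1$ of $w'$. Counting for each $1$ the number of $0$s occurring before it, this yields the key identity
\begin{equation*}
\inv(w) = (k+1)\,|w'|_1 + \inv(w').
\end{equation*}
The factor $q^{(k+1)|w'|_1}$ is precisely what converts $x^{|w'|_1}$ into $(xq^{k+1})^{|w'|_1}$, so the contribution of all strings with first generator $1^k0^{k+1}$ is
\begin{equation*}
\sum_{w'\in RC_{n-1-2k}} x^{\,k+|w'|_1}\,q^{(k+1)|w'|_1+\inv(w')} \;=\; x^k\,Q_{n-1-2k}\!\left(xq^{k+1}\right).
\end{equation*}

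Summing over $k\ge 0$ then gives the stated recurrence, and the initial data $Q_0=Q_1=Q_2=1$ together with $Q_{-m}=0$ take care of the base and boundary terms: a string that is a single generator $1^k0^{k+1}$ contributes the pure power $x^k$ via $Q_0=1$. I expect the only delicate point to be the inversion bookkeeping, where one must verify that the leading run of ones contributes nothing, that the $k+1$ zeros of the block see \emph{all} of the ones of $w'$ and only those, and that the internal structure of $w'$ is neither perturbed nor double counted. Once this local accounting is pinned down via the explicit description of $\inv$ (for each $1$, the number of $0$s before it), the remainder is a routine resummation, which I would cross-check against the tabulated values of $Q_3,\dots,Q_7$ displayed above.
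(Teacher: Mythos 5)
Your proof is correct and is essentially the paper's argument: the paper's one-line proof simply invokes the fundamental decomposition of Fibonacci-run graphs from the companion paper (Lemma 4.1 there), which is exactly your factorization of a nonempty run-constrained string off its leftmost free-monoid generator $1^k0^{k+1}$, and you carry out the weight and inversion bookkeeping that the paper leaves implicit (including the vanishing of terms with $k>\lfloor (n-1)/2\rfloor$ via $Q_{-m}=0$). One remark confirming the ``delicate point'' you flagged: your key identity $\inv(w)=(k+1)|w'|_1+\inv(w')$ and the tabulated $Q_3,\dots,Q_7$ are valid for the statistic ``for each $1$, the number of $0$s preceding it'' (pairs $i<j$ with $w_i<w_j$), which is the convention your counting uses and the one the proposition actually requires; the paper's literal definition (pairs with $w_i>w_j$) has the inequality reversed, since under it $\inv(100)=2$ would give $Q_3=1+xq^2$ rather than $1+x$, and the recurrence as stated would fail.
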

\begin{proof}
	The proof is a consequence of the fundamental decomposition of Fibonacci-run graphs given in
Theorem~\cite[Lemma 4.1]{paper1}. Note that the terms in the 
	sum vanish for $ k > \lfloor \frac{n-1}{2} \rfloor$.
\end{proof}

Define
$$
H(x, t ) = \sum_{n \geq 0} Q_n (x) t^n ~.
$$
Then a consequence of~\eqref{invg} (writing them out with $x$ replaced by $xq^k$ for $k \geq 0$ 
and summing up by columns) is the functional identity
\begin{equation}\label{functional}
H(x, t ) = 1+ t \sum_{k \geq 0} x^k  t^{2k} H(xq^{k+1} ,t) ~.
\end{equation}
Moreover, substituting $q=1$ in $Q_n(x)$ gives the rank generating polynomial
$F(\R_n, x )$ of $\R_n$ defined in~\eqref{rank_generating}.
In other words, the coefficient of $x^k $ becomes the number of words in $RC_n$ with 
weight $k$ as given by~\eqref{weight_formula}. For instance, for $n=9$ and $ q=1$, we have
$$
1 + 7 x + 15 x^2 + 10 x^3 +x^4,
$$
for which the coefficients are
$$
{8 \choose 0 } = 1,~
{7 \choose 1 } = 7,~
{6 \choose 2 } = 15,~
{5 \choose 3 } = 10,~
{4 \choose 4 } = 1.
$$

Taking $q=1$ in~\eqref{functional}, we see that $H$ satisfies
$$
H(x, t ) = 1+  \frac{t H(x,t)}{1- x t^2}~,
$$
so that
$$
H(x,t) = \frac{1-x t^2}{1-t-x t^2} ~.
$$
After subtracting off the terms $ 1, t, t^2$ corresponding to the null word, $0$ and $00$, and 
dividing by $t^2$ to remove the contribution of the trailing $00$ in the strings of $RC_n$ to the 
length, we get the generating function of 
the rank generating polynomials $ F(\R_n, x)$ (see~\eqref{rank_generating}) 
of Fibonacci-run graphs as a poset as
\begin{eqnarray*}
\frac{t ( 1+x + x t )}{1-t- x t^2} & = & \sum_{n \geq 1} F(\R_n, x) t^n \\
&=& (1+x) t + (1 + 2 x) t^2 + (1+3x+x^2)t^3+ (1+4x + 3 x^2) t^4 +\cdots
\end{eqnarray*}

Inversions and the major index  statistics of a similar flavor  for a class 
of related Fibonacci strings can be found in~\cite{Egecioglu2020TOC}.

\section{Embedding related results}
\label{sec:encoding}
We can encode a binary string of length $n$ as a run-constrained binary string of length $3n+1$.
Let $s_i = 1^i 0^{i+1}$ for $ i \geq 1$. Given a binary string with $k$ runs of $1$s, 
$$
w= 
0^{j_0} 1^{i_1} 0^{j_1} 1^{i_2} \cdots 1^{i_k} 0^{j_k},
$$
we first encode the runs as 
$$
0 s_{j_0} s_{i_1} s_{j_1} \cdots  s_{i_k}s_{j_k}
$$
if 
$ j_0 >0$ (i.e. the binary string starts with 0),  and by 
$$
s_{i_1} s_{j_1} \cdots  s_{i_k}s_{j_k}
$$
if $j_0 = 0$ (i.e. the binary string starts with 1). Then, if necessary, we append $0$s at the end to make the length of the encoding $3n+1$.

For $n=3$, this works as follows
$$
\begin{array}{lclcl}
000 & \rightarrow & 0 s_3 & \rightarrow & 01110000 \, 00 \\
001 & \rightarrow & 0 s_2 s_1 &\rightarrow &  011000100 \, 0\\
010 & \rightarrow & 0s_1 s_1 s_1 & \rightarrow & 0100100100\\
100 & \rightarrow & ~ s_1 s_2 & \rightarrow & 10011000 \, 00 \\
011 &\rightarrow &  0 s_1 s_2 & \rightarrow & 010011000 \, 0\\
101 &\rightarrow & ~ s_1s_1s_1 & \rightarrow & 100100100 \, 0 \\
110 &\rightarrow &  ~ s_2 s_1 & \rightarrow & 11000100 \, 00\\
111 & \rightarrow & ~ s_3 & \rightarrow & 1110000 \, 000
\end{array}
$$

The described encoding embeds the hypercube $Q_n$ into $\R_{3n-1}$ (the trailing $00$ is not included in the vertex labels here). 

\begin{question}\label{question:dilation}
	What is the dilation of this embedding, i.e.  the value of 
$$
\max_{uv \in E(Q_n)} d_{\R_{3n-1}} (u',v') ~,
$$ 
where the prime denotes the image under the encoding above?
\end{question}

We note that although it is intuitive, the embedding described above does not seem to give the 
the smallest dimensional Fibonacci--run graph in which it is possible to embed $Q_n$.\\

The study
of hypercubes of various dimensions which are subgraphs of a Fibonacci-run graphs are interesting 
in its own right. An analogous 
question has been studied for Fibonacci cubes, cf.\ results about cube polynomial 
listed in~\cite{klavzar2013-survey}, and generalizations in~\cite{Saygi2017} .

Let $h_{n,k}$ denote the number of $k$-dimensional hypercubes $Q_k$ in $\R_n$.
A corollary of~\cite[Proposition 8.2]{paper1}, obtained by taking $q=1$ in 
that proposition is the following. The generating function of the cube 
polynomials of $\R_n$ is given by

{\small
\begin{equation}\label{cubeGF}
\sum_{n\geq 1} t^n \sum_{  k \geq 0 } h_{n,k} x^k  = 
\frac{t (2+x +(x+1)t +  x (x+2)t^2  +x(x+1)t^3 + x (x+1)t^4}
{1 -t -t^2 -xt^3 - x (x+1) t^5 } ~.
\end{equation}
}

In the series expansion of this generating function in powers of $t$, the 
largest $m$ for which the term $x^m$ appears as a coefficient of $t^n$, gives the 
dimension of the largest hypercube $Q_m$ that can be embedded in $\R_n$.
Calculations on~\eqref{cubeGF}, using high order derivatives with respect to $x$ 
(with Mathematica) suggest  that 
for $ m \geq 0$,  the hypercube $Q_ {2 m + 1}$ embeds in $\R_{5 m + 1}$ and
the hypercube $Q_ {2 m + 2}$ embeds in $\R_{5 m + 3}$, 
and these are the smallest possible Fibonacci-run graphs with this  property.  So it appears that 
%Incidentally, the two cases of odd and even $n$ can be combined together to say that 
the hypercube graph $ Q_n$ can be embedded into 
$\R_{\lceil (5n-4)/2 \rceil}$, 
and this is the smallest possible run graph with this property. 

\begin{conjecture}\label{conjecture:embedding}
The smallest $m$ for which 
the hypercube graph $ Q_n$ can be embedded into  $\R_m$ is 
$m= \big\lceil \frac{5n-4}{2} \big\rceil$. 
\end{conjecture}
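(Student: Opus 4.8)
The plan is to prove both inequalities separately: that $Q_n$ does embed into $\R_m$ for $m = \lceil (5n-4)/2 \rceil$, and that it cannot embed into any $\R_{m'}$ with $m' < m$. The starting point is the fact already exploited in the M\"obius computation, that every interval $[u,v]$ of the poset $(\R_n,\le)$ is isomorphic to a Boolean lattice and hence to the hypercube $Q_{|v|_1 - |u|_1}$. Thus a subcube $Q_n$ sitting inside a Fibonacci-run graph is precisely an interval all of whose elements are run-constrained strings. I would therefore reduce the whole question to a statement about single strings: call a binary string $v$ \emph{cubical} if every string obtained from $v$ by switching some of its $1$s to $0$s is again run-constrained. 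Modulo a reduction (argued below) to the case where the bottom element is the all-zero string, the subcubes $Q_n$ of a Fibonacci-run graph correspond exactly to cubical strings with $n$ ones, and the smallest hosting graph $\R_m$ is governed by the minimal length of such a string.

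For the upper bound I would exhibit an explicit cubical string. Writing $s_2 = 11000$ and $s_1 = 100$ in the notation of $S$, take $v = (11000)^{p}$ when $n = 2p$ and $v = (11000)^{p}\,100$ when $n = 2p+1$. Each $11000$ block contributes two freely flippable $1$s, since turning off either or both of its $1$s leaves a single $1$ followed by at least three $0$s (or the empty block), all run-constrained; the optional trailing $100$ contributes one more flippable $1$. As the blocks do not interact, every subset-filling is run-constrained, so $v$ is cubical with exactly $n$ ones. Its length is $5p$ in the even case and $5p+3$ in the odd case, and in both cases this equals $\lceil (5n-4)/2 \rceil + 2$; deleting the trailing $00$ then realizes $Q_n$ as an interval in $\R_{\lceil (5n-4)/2 \rceil}$.

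For optimality I would first prove the structural characterization: a run-constrained string is cubical if and only if each of its maximal runs of $1$s has length at most $2$. The forward direction is the crucial point, since a run of three consecutive $1$s admits the sub-filling $101$, in which a single $1$ is followed by only one $0$, violating the run constraint; conversely, within a run of length at most $2$ no such short internal gap can be created, and switching $1$s off elsewhere only lengthens runs of $0$s, which never breaks the constraint. Granting this, I would minimize the length: if the $1$-runs of a cubical string have lengths $a_1,\dots,a_s \in \{1,2\}$ with $\sum_i a_i = n$, the run constraint forces at least $a_i+1$ zeros after the $i$-th run, so the length is at least $\sum_i (2a_i+1) = 2n + s$, minimized by taking $s = \lceil n/2 \rceil$. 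This gives minimal length $5n/2$ for even $n$ and $(5n+1)/2$ for odd $n$, matching the construction exactly, so that $m = (\text{length}) - 2 = \lceil (5n-4)/2 \rceil$. To justify taking the bottom element to be the all-zero string, I would observe that any fixed $1$-run of $u$ is present in every filling and so consumes length together with its mandatory trailing zeros while contributing no free coordinate; deleting such a run strictly shortens the string without changing $n$, so the optimum occurs at $u=0$.

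The main obstacle is to exclude copies of $Q_n$ that are not subcubes (faces). Everything above controls intervals, and the cube polynomial generating function~\eqref{cubeGF}, whose largest power of $x$ occurring with $t^N$ records the largest face-dimension in $\R_N$, furnishes an independent confirmation of the same numbers; I would therefore also carry out the more delicate extraction of the leading $x$-degree of the coefficient of $t^N$ in~\eqref{cubeGF} from its numerator and denominator, verifying the floor/ceiling pattern rather than merely observing it. Upgrading this to \emph{all} isomorphic copies of $Q_n$ is the genuinely hard step: the cleanest route is to show that $\R_m$ is a partial cube, so that any isometrically embedded $Q_n$ is convex and hence a face, after which the interval analysis applies verbatim. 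Establishing the partial-cube property (or otherwise arguing that a minimum-length host is forced to carry an interval structure) is where I expect the real work to lie, and it is the one step likely to demand a new lemma rather than the routine verifications used for the bounds themselves.
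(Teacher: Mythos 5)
First, a point of calibration: the paper does not prove this statement at all. It appears as Conjecture~\ref{conjecture:embedding}, supported only by Mathematica extraction of the leading powers of $x$ in the cube polynomial generating function~\eqref{cubeGF}, so your proposal is being measured against computational evidence rather than an actual proof. Most of your outline is sound and goes well beyond what the paper offers: the characterization of cubical strings (every maximal $1$-run has length at most $2$), the length bound $\sum_i (2a_i+1) = 2n+s \ge 2n+\lceil n/2\rceil$, the explicit witnesses $(11000)^p$ and $(11000)^p 100$, and the reduction to an all-zero bottom element are all correct and together pin down $m=\lceil (5n-4)/2\rceil$ for \emph{interval} copies of $Q_n$. (The deletion step in the reduction does hold: removing a character at a fixed-$1$ position from any run-constrained string either shortens a $1$-run, or removes a singleton $1$-run and merges two $0$-runs, and neither operation can violate the run condition.)

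The genuine gap is the one you flag yourself -- excluding copies of $Q_n$ that are not intervals -- but the repair you propose is the wrong tool. A subgraph copy of $Q_n$ in $\R_m$ is not a priori isometric in $\R_m$, so partial-cube machinery cannot be applied to it; whether $\R_m$ is a partial cube is itself left open in this paper (it is essentially the final Question, on isometric dimension), so you would be resting the proof on another unresolved problem; and even in partial cubes, ``isometric cube implies convex'' is not a citable standard fact. Fortunately the gap closes with something far more elementary: $\R_m$ is an induced subgraph of $Q_m$, and \emph{every} subgraph of a hypercube that is isomorphic to $Q_n$ is a face. Indeed, the image of any $4$-cycle of $Q_n$ is a $4$-cycle of $Q_m$, whose opposite edges flip the same coordinate; since the coordinate classes of $E(Q_n)$ are generated by the opposite-edges-in-$4$-cycles relation, the embedding induces a map $\sigma$ from the $n$ coordinates of $Q_n$ to coordinates of $Q_m$, injectivity of the embedding forces $\sigma$ to be injective, and the image is then exactly the face spanned by the coordinates $\sigma(1),\dots,\sigma(n)$ through the image of any one vertex. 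Consequently every copy of $Q_n$ inside $\R_m$ is a face of $Q_m$ all of whose $2^n$ vertices are run-constrained strings, i.e.\ precisely an interval, and your interval analysis then applies verbatim. With that substitution in place of the partial-cube detour, your outline becomes a complete proof of the conjecture -- something the paper itself does not possess.
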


\ignore
{
Recall that the isometric dimension of a partial cube is the minimum dimension of a hypercube onto which it may be isometrically embedded (i.e. distances in the original graph are also distances in the target graph), and is equal to the number of equivalence classes of the Djokovic-Winkler relation. *** add references, maybe even write out the relation? ***

The isometric dimension of $\R_n$ and $ \R_n^*$ (i.e.  $\R_n$ 
without the all zero vertex) 
was calculated on Mathematica by
constructing the equivalence classes of the Dvokovic-Winkler theorem 
(Run\_graph\_isometric\_dimension\_May9\_2020.nb)
for $n \leq 13$. We have the following table

\begin{center}
	\begin{tabular}{|l|c|c|c|c|c|c|c|c|c|c|c|c|} \hline \hline
		$n$ & 2& 3 & 4 & 5 & 6 & 7 & 8 & 9 & 10 & 11 & 12 &13 \\ \hline
		dimension of $\R_n$ & 2& 3 & 4 & 5 & 6 & 7 & 8 & 9 & 10 & 11 & 12 & 13\\ \hline
		dimension of $\R_n^*$ &1 & 2 & 6 & 6 & 6 & 8 & 9 & 10 & 11 & 12 & 12& 13 \\ \hline
	\end{tabular}
\end{center}

It is not clear what the second row suggests for the isometric dimension 
of $\R_n^*$.
Is it $n$ or $n+1$ depending on $n$, or does it stabilize to $n$ eventually?
%end ignore
}

%%%%%%%%%%%%%%%%%%%%%%%%%%%%%%%%%%%%%%%%%%
%%%%%%%%%%%%%%%%%%%%%%%%%%%%%%%%%%%%%%%%%%
\section{Further directions}
\label{sec:further}
%%%%%%%%%%%%%%%%%%%%%%%%%%%%%%%%%%%%%%%%%%
%%%%%%%%%%%%%%%%%%%%%%%%%%%%%%%%%%%%%%%%%%

In the final section, we list various questions and conjectures, which are of interest in the further study of 
Fibonacci-run graphs. These are in addition to 
the Conjecture~\ref{conjecture:embedding} on the determination of the smallest dimensional Fibonacci-run graph 
that contains $Q_n$, 
the Question~\ref{question:dilation} 
on the dilation of the mapping described at the start of Section~\ref{sec:encoding}, 
and the Conjecture~\ref{conjecture:GF} on the form of the generating function of 
the number of vertices of a given degree in $\R_n$. 

For Conjecture~\ref{conjecture:GF}, 
the extraction of the coefficients in  the generating function 
of the degree enumerator polynomials $f(t,x)$ of Theorem~\ref{degreegf}, one may  
use the mechanism described in Remark~\ref{degree_remark}, and the 
examples presented afterwards, along with the Leibniz formula for higher derivatives and the 
reciprocal differentiation result in~\cite{leslie1991}.

Considering the results obtained in Examples~\ref{ex:deg2} -- \ref{ex:deg5} in Section~\ref{sec:con}, the following general question arises.

\begin{question}
%Calculate the number of vertices of degree $k$ in $\R_n$ explicitly.
What is the number of vertices of degree $k$ in $\R_n$?
\end{question}

The 
analysis and the conjecture  on the diameter of $\R_n$ can be found in~\cite{paper1}. In relation 
to this, a natural question that arises is the following:
\begin{question}\label{question:radius}
%Determination of the radius, possibly in conjunction with the construction for the 
What is the 
radius of $\R_n$? 
\end{question}
It may be possible to consider Question~\ref{question:radius} in conjunction with the analysis for the 
the exact diameter
of $\R_n$ in~\cite[Section 5]{paper1}.\\ 

An irregularity measure of graphs was defined by Albertson~\cite{Albertson},
and recently 
studied for various families of graphs in~\cite{Klavzar_irr}, and generalized to a polynomial 
enumerator in~\cite{irr_ESS}. 

\begin{question}
What is the 
%Determination of the irregularity of $\R_n$ or more generally the 
the irregularity, or more generally the 
irregularity polynomial, of $ \R_n$, as defined in~\cite{irr_ESS}?
\end{question}

Finally, there is the problem of the determination of the smallest dimensional 
hypercube into which $ \R_n$ can be isometrically embedded. In other words

\begin{question}
What is the isometric dimension of $\R_n$? 
\end{question}

It seems likely that 
the isometric dimension of $ \R_n$ is as low as a linear function of $n$. \\

% *** TO DO (in sync with the 1st draft) ***

%%%%%%%%%%%%%%%%%%%%%%%%%%%%%%%%%%%%%%%%%%
%%%%%%%%%%%%%%%%%%%%%%%%%%%%%%%%%%%%%%%%%%
\section*{Acknowledgments}
%%%%%%%%%%%%%%%%%%%%%%%%%%%%%%%%%%%%%%%%%%
%%%%%%%%%%%%%%%%%%%%%%%%%%%%%%%%%%%%%%%%%%
%%%%%%%%%%%%%%%%%%%%%%%%%%%%%%%%%%%%%%%%%%
We would like to thank Prof. Sandi Klav\v{z}ar 
for enabling the cooperation of the authors of this article, and his early interest in the topic.
The first author would like to acknowledge the hospitality of Reykjavik University during his 
sabbatical stay there, during which a portion of this research was carried out.
%%%%%%%%%%%%%%%%%%%%%%%%%%%%%%%%%%%%%%%%%%%%%%%%%%%%%%%
%%%%%%%%%%%%%%%%%%%%%%%%%%%%%%%%%%%%%%%%%%%%%%%%%%%%%%%

\bibliographystyle{acm}
\bibliography{mybib}{}

\end{document}